\documentclass[leqno,10pt]{amsart}

\usepackage[english]{babel}
\usepackage[T1]{fontenc}

\usepackage{amsmath,amssymb,amsthm}
\usepackage{mathtools}  
\usepackage[margin=1in]{geometry}
\usepackage{float}
\usepackage{algorithm}
\usepackage{algpseudocode}
\usepackage[dvipsnames]{xcolor}

\usepackage{graphics,tikz,caption}
\usepackage{subfig} 
\usetikzlibrary{calc,positioning}

\usepackage{mathrsfs}
\usepackage{stmaryrd}

\usepackage[colorlinks	=	true, linkcolor	=	red, urlcolor	=	red, citecolor	=	red]{hyperref}
\usepackage{bookmark}									
\usepackage{cleveref}

\usepackage{multicol}
\usepackage{todonotes}
\usepackage{enumitem}
\usepackage{verbatim}

\theoremstyle{definition}
\newtheorem{thm}{Theorem}[section]
\newtheorem{exm}[thm]{Example}
\newtheorem{defi}[thm]{Definition}
\newtheorem{lemm}[thm]{Lemma}

\newtheorem{prop}[thm]{Proposition}
\newtheorem{ques}[thm]{Question}

\newtheoremstyle{case}
{3pt}
{3pt}
{}
{}
{\itshape}
{:}
{.5em}
{}

\theoremstyle{case} 
\newtheorem{case1}{Case}
\newtheorem{case2}{Case}

\theoremstyle{remark}

\DeclareMathOperator{\im}{Im}

\allowdisplaybreaks

\newcommand{\apmd}[2][]{															
	\ifthenelse{\equal{#1}{}}%
					{ \operatorname{N}_{#2}	}%
					{ \operatorname{N}_{#1}(#2) 	}}
					
\begin{document}

\title{Metric geodesic covers of graphs}
\author{Jerry Chen}
\address{Department of Mathematical Sciences, Stevens Institute of Technology}
\email{jchen9@stevens.edu}

\author{Kyle Hess}
\address{Department of Mathematics, UCLA}
\email{kghess@g.ucla.edu}

\author{Matthew Romney}
\address{Department of Mathematics, University of Hawaii at Manoa}
\email{mromney@hawaii.edu}
	
\date{}
\thanks{}

\begin{abstract}
    We study the problem of finding, for a given one-dimensional topological space $X$, a cover of $X$ of smallest size by geodesics with respect to some metric. The infimal size of such a set is called the \textit{metric geodesic cover number} of $X$. We prove reductions enabling us to find, with computer assistance, optimal geodesic covers of a graph and use these to determine the cover number of several standard graphs, including $K_4$, $K_5$ and $K_{3,3}$. We also give a catalogue of topological spaces with cover number $3$, and use it to deduce that any such space must be planar. 
\end{abstract}

\subjclass[2020]{05C12,05C10}
\keywords{geodesic cover, isometric path cover, metric graph, metric embedding}
\thanks{M. Romney and J. Chen were partially supported by the National Science Foundation under grant DMS-2413156.}
\maketitle

\section{Introduction}

A \textit{geodesic} is a curve of minimal length and one of the basic objects in metric geometry. In this paper, we investigate the following natural problem: given a one-dimensional topological space, such as a topological graph, to find a cover of this space by geodesics, with respect to some metric, of smallest size. We introduce the following definition.

\begin{defi} \label{defi:mgcn}
    Let $X$ be a topological space. The \textit{metric geodesic cover number} of $X$ is the infimal $n \in \mathbb{N}$ such that there is a metric on $X$ inducing the same topology in which $X$ is the union of $n$ geodesics in $X$.
\end{defi}

We also state the following variation. 

\begin{defi} \label{defi:emgcn}
    Let $X$ be a topological space. The \textit{extended metric geodesic cover number} of $X$ is the infimal $n \in \mathbb{N}$ such that there exists a topological space $Y$ containing $X$ and a metric on $Y$ inducing the same topology in which $X$ is the union of $n$ geodesics in $Y$.
\end{defi}

The second definition has the advantage that it behaves monotonically with set inclusion: if $X_1 \subset X_2$, then the extended metric geodesic cover number of $X_1$ is no greater than that of $X_2$. However, the extended metric geodesic cover number is more difficult to compute and we only have limited progress in this direction. 

To our knowledge, these definitions have not been studied before, although a variety of similar notions have been widely studied in graph theory. Our definition most closely resembles the \textit{edge geodesic cover number} of a graph. This is defined similarly but with three key differences:

\begin{itemize}
    \item[1.] For the edge geodesic cover number, geodesics may begin and end only at graph vertices. Our definition makes no such requirement.
    \item[2.] For the edge geodesic cover number, each edge implicitly has a length of $1$. In our definition, each edge can be given an arbitrary positive length.
    \item[3.] Our definition does not require the space to be a topological graph. For example, consider a metric space comprising two geodesics that intersect on a Cantor set.
\end{itemize}

Due to the these differences, the metric geodesic cover number has essential differences in behavior from the edge geodesic cover number. We use the word ``metric'' in our terminology mainly as a way to distinguish our definition from alternative definitions in the literature. For conciseness, we will usually just refer to the ``cover number'' and ``extended cover number'' of a space throughout this paper.  

Other related notions include the \textit{vertex geodesic cover number} of a graph, often called simply the \textit{geodesic cover number} or the \textit{isometric path number} in the literature. This is defined similarly to the edge geodesic cover number, but only requiring the vertices to be covered. See Manuel--Klavzar--Prabha--Arokiaraj \cite{manuel_geodesic_2024} and Chakraborty--Foucaud--Hakanen \cite{CFH:25} for some contributions to these topics and Manuel \cite{manuel_revisiting_2018} for a survey. Note that while both versions of the concept are found in the literature, the vertex version seems the more popular of the two. We refer the reader to Fisher--Fitzpatrick \cite{FF:01}, Pan--Chang \cite{pan_isometric_2003} and Manuel \cite{manuel_isometric_2018} specifically for the vertex geodesic cover number.
A further related concept is the \textit{geodetic number} of a graph, introduced by Harary--Loukakis--Tsouros in \cite{harary_geodetic_1993}. The geodetic number is the minimum size of a set \(S\) of vertices so that any other vertex is on a geodesic between two vertices from \(S\). See Bre{\v s}ar--Kov{\v s}e--Tepeh \cite{BBM:11} for a survey of this topic. Finally, our work has some similarity with the \textit{order dimension} of a graph studied by Schnyder \cite{Sch:89}.


We also mention the notion of \textit{metrizable graph}, introduced by Cizma--Linial in \cite{cizma_geodesic_2022} and further studied in \cite{chudnovsky_structure_2023}. A metrizable graph is defined to be one for which any so-called \textit{consistent family of paths} admits a length metric in which each path from the collection is a geodesic. Their work is geared at the same underlying question we consider here: determining for a given family of paths whether a space admits a metric in which they are geodesics. However, there seems to be no direct correspondence between their results and those in this paper. In particular, one conclusion from \cite{cizma_geodesic_2022} is that metrizable graphs are fairly rare. On the other hand, every finite graph necessarily admits a geodesic cover of smallest size.

This paper has three main objectives. First is to develop the basic theory of metric geodesic covers sufficiently far to enable one to compute the cover number of a given graph, at least with sufficient computation time. Our main result in this direction is the following theorem, which guarantees that any graph has an optimal geodesic cover of a specific type. See Section 3 for the terminology.

\begin{thm} \label{thm:2subdivision}
Let $X$ be a finite topological graph. Then there is a retracted geodesic cover of smallest size whose endpoints are graph vertices of a $2$-subdivision of $X$. 
\end{thm}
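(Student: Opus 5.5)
The plan is to start from an arbitrary geodesic cover of minimal size $n$ and normalize it in three stages: replace the metric by a graph length metric, retract the geodesics, and then show that the endpoints can only occupy at most two positions inside each edge. Throughout I use the fact that a subcurve of a geodesic is again a geodesic. Shortening a geodesic therefore never destroys the geodesic property; it can only threaten the covering property.

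\emph{Step 1 (length metric).} Let $d$ be a metric on $X$ realizing a cover by geodesics $\gamma_1,\dots,\gamma_n$. Let $\hat d$ be the induced length metric, restricted to the finite graph $X$ with each edge given its $d$-length. If some edge has infinite $d$-length, I first trim it: the geodesics have finite length, so each covers only a finite-length part of the edge, and this should reduce to the finite case. Since $\hat d \ge d$ and each $\gamma_i$ has the same length in both metrics, we get $\hat d(\gamma_i(0),\gamma_i(1)) \ge d(\gamma_i(0),\gamma_i(1)) = \ell(\gamma_i) \ge \hat d(\gamma_i(0),\gamma_i(1))$. So every $\gamma_i$ is still a geodesic, and the metric may be taken to be a metric graph structure on $X$ with positive edge lengths.

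\emph{Step 2 (retraction).} Next, shrink each geodesic as far as possible while the union remains $X$. I would take this to be the meaning of the ``retracted'' condition from Section 3; if that section defines it differently, this step should be adapted to that definition. One way to do this is to minimize the total length $\sum \ell(\gamma_i)$ over subcovers of the form $\gamma_i|_{[a_i,b_i]}$, which is a compactness argument. In the minimal configuration, every terminal segment of every $\gamma_i$ contains points covered by no other geodesic. Consequently, if an endpoint $p$ of $\gamma_i$ lies in the interior of an edge $e=uv$, then $\gamma_i$ approaches $p$ from one side, say from $u$. Moreover, $p$ is the farthest point from $u$ along $e$ reached by any geodesic that enters $e$ from $u$ and terminates inside $e$. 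Otherwise the final piece of $\gamma_i$ inside $e$ would be covered by the farther geodesic and could be retracted back to $u$ (or to wherever $\gamma_i$ entered $e$).

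\emph{Step 3 (two positions per edge).} By Step 2, within each edge $e=uv$ there are only three kinds of endpoints. There is at most one position $p_u$ of endpoints coming from the $u$ side and at most one position $p_v$ coming from the $v$ side. All other geodesics meeting the interior of $e$ traverse it entirely, or lie entirely inside $e$. I will insert $p_u,p_v$ as new vertices, which gives at most a $2$-subdivision of $e$, adding dummy vertices where fewer than two are needed.

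\emph{Main obstacle.} The main difficulty is the degenerate situations that Step 2 does not immediately rule out. These are: a geodesic contained entirely in the interior of one edge, whose two endpoints are both interior; a loop edge $u=v$; and several geodesics ending at the same interior point from different sides. For a geodesic inside a single edge, minimality of the cover forces it to be the only geodesic covering some subsegment. I would argue that its two endpoints can then serve as $p_u$ and $p_v$, since no other geodesic ends inside $e$: any other geodesic ending there would have to overlap it and could be retracted. For loops, I would first subdivide once, or note that a geodesic cannot traverse more than half of a loop. I therefore expect the bookkeeping in this step, showing that retraction never requires a third interior point on an edge, to be where the real care is needed.
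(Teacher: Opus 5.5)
There is a genuine gap, and it starts with the target. In this paper the $2$-subdivision inserts \emph{one} new vertex in each edge: $(u,v)$ is replaced by $(u,w)$ and $(w,v)$. So you must show that each open edge contains at most one endpoint position. Your Step 3 stops at two positions $p_u,p_v$, which only places the endpoints among the vertices of a subdivision cutting each edge into three pieces.

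Getting $p_u=p_v$ is easy once no geodesic has both endpoints in the interior of a single edge. A geodesic traversing $e$ rules out interior endpoints, and $p_u\neq p_v$ would give either an overlap, making one geodesic retractable, or an uncovered gap. But geodesics with both ends inside one edge cannot be removed by shortening, and they occur in minimal retracted covers. Take the cycle formed by two edges $e,f$ joining $u$ and $v$, of lengths $10$ and $1$. For $0<p<9/2$, cover it by the two complementary arcs of length $11/2$ whose endpoints are the points of $e$ at distances $p$ and $p+11/2$ from $u$. This is a retracted geodesic cover of minimal size $2$, nothing in it can be shortened, yet it has two distinct endpoints inside $e$. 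It also refutes your claim that no other geodesic ends inside $e$ when one geodesic lies in $e$: the second arc ends at both endpoints of the first. Retractedness alone does not even exclude two abutting geodesics inside one edge, which would give three positions.

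What is missing is a step that \emph{modifies} the cover rather than shortening it; this is the content of Lemma~\ref{lemm:proper_cover}. Suppose $\alpha$ lies in the interior of $e=(u,v)$.
\begin{itemize}
\item Replace $\alpha$ by the maximal geodesic $\alpha'\subset e$ starting at $u$, with far endpoint $y'$.
\item Take a geodesic $\beta$ through $v$ with an endpoint $w$ inside $e$. Replace it by the path $\beta'$ obtained by moving $w$ to $y'$, retracting the other end of $\beta$ to $u$ if it also lies in $e$. Call the other endpoint of $\beta'$ $z$.
\item Then retract the remaining geodesics.
\end{itemize}
The point to verify is that $\beta'$ is still a geodesic when $w$ lies beyond $y'$. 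This is where the maximality of $\alpha'$ enters: it yields a second geodesic from $y'$ to $u$ passing through $v$. So any competitor from $y'$ to $z$ that first leaves $e$ through $u$ can be rerouted through $v$ at no extra length. A competitor leaving through $v$ cannot beat $\beta'$, since it has length at least that of $e$ between $y'$ and $v$, and $\beta'|_{[v,z]}$ is a geodesic.

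The resulting cover has the same size and is proper, and only then does the one-point-per-edge argument apply. In the example above, this produces the geodesics $[u,y']\subset e$ and the arc from $y'$ through $v$ and $f$ back to $u$, with the single interior endpoint $y'$ on $e$. Your overall skeleton (pass to a graph length metric, retract, analyze endpoints edge by edge) is fine; it is this exchange step that has to be added.
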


The second objective is to use Theorem \ref{thm:2subdivision} to actually find, with computer assistance, optimal covers for a variety of example graphs. While our computational capabilities are limited to graphs with few vertices, they nevertheless serve the role of acquiring a baseline of hard data on the topic that can guide future investigation. 

As a third objective, we provide in Proposition \ref{prop:3geodesic_catalogue} a complete catalogue of spaces with geodesic cover number three. This seems to be the largest number for which a relatively simple structure exists. While our catalogue involves several individual cases, it leads to the following consequence.

\begin{thm} \label{thm:planar}
If $X$ has cover number $3$ or less, then $X$ is planar.
\end{thm}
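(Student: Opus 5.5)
The plan is to derive the statement from the catalogue in Proposition \ref{prop:3geodesic_catalogue}, together with a direct argument for cover numbers $1$ and $2$. Each space in the catalogue will be exhibited with an explicit embedding in $\mathbb{R}^2$.

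First, I handle cover number at most $2$. If $X$ is a single geodesic, it is an arc (or a point), which is clearly planar. If $X = \gamma_1 \cup \gamma_2$, the key observation is that for geodesics $\gamma_1,\gamma_2$, the intersection $\gamma_1\cap\gamma_2$ is a closed set met by both curves in the same linear order. This holds because two points $p,q$ of the intersection satisfy $d(p,q) = |t_p - t_q|$ in the arclength parameters of both curves. Hence the two arcs meet along a closed subset $K$ of $[0,\ell_1]$ (possibly Cantor-like) in an order-preserving, indeed isometric, way. This lets me embed $\gamma_1$ as a segment on the $x$-axis. Then $\gamma_2$ is drawn as a curve that agrees with the segment on $K$ and, on each complementary gap of $K$ in $\gamma_2$'s parameter, runs as a small arch above the axis. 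Nested or disjoint arches never cross because the order is preserved. The two end pieces beyond the extremes of $K$ are drawn as arcs going off into the upper half-plane.

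Second, I pass to cover number $3$. Here I appeal to Proposition \ref{prop:3geodesic_catalogue}, which lists all configurations of three geodesics. For each entry I extend the arch construction. The tools available are the pairwise order-preservation just established, and the constraint that the three pairwise orders are mutually consistent with the metric, e.g.\ via the triangle inequality along shared points.

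The planar drawing uses $\gamma_1$ as the axis, $\gamma_2$'s excursions in the upper half-plane, and $\gamma_3$'s excursions either below the axis or nested among $\gamma_2$'s arches. For each catalogue case I will check that $\gamma_3$'s excursions either avoid $\gamma_2$ except at common points or can be routed through the lower half-plane. This requires verifying that no pattern forces a $K_{3,3}$- or $K_5$-type crossing. Where the intersection sets are infinite, I will approximate them by finite unions of gaps and note that arches can be shrunk geometrically so the limiting map remains a continuous injection. Compactness of $X$ then makes it an embedding.

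The main obstacle is the case analysis for three curves. In particular, the hard configurations are those where $\gamma_3$ must alternately touch $\gamma_1$-only and $\gamma_2$-only points, which prevents placing all of its excursions on one side. I expect the catalogue to show that such alternation is restricted, for instance by bounding how many times $\gamma_3$ can switch between the two, using the geodesic property. Given that restriction, a bounded number of side switches can be realized planarly by threading $\gamma_3$ through gaps of $\gamma_1$ between arches. I would rely on the structural description in Proposition \ref{prop:3geodesic_catalogue} to supply exactly that bound, and otherwise proceed case by case.
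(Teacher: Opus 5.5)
Your treatment of one and two geodesics is fine and is essentially the paper's. The paper draws both $X_1$ and $X_2$ as graphs $t\mapsto (t,\pm d(\cdot,\cdot))$ on opposite sides, while you keep $\gamma_1$ flat; either works.

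\textbf{The gap is in the three-geodesic case, which is the actual content of the theorem.} You never construct the embedding, and the structural input you plan to extract from Proposition~\ref{prop:3geodesic_catalogue} is not there. In case (1) of the catalogue (a compatible partial order), nothing bounds how often $\gamma_3$ alternates between $\gamma_1$-only and $\gamma_2$-only points. For instance:
\begin{itemize}
\item Let $\gamma_1,\gamma_2$ coincide except on bubbles $B_1,B_2,\dots$ that accumulate at a point $z$ interior to both.
\item Let $\gamma_3$ run monotonically toward $z$ and end there. It meets the $\gamma_2$-side of $B_n$ for $n$ odd and the $\gamma_1$-side for $n$ even, with connecting arcs disjoint from $\gamma_1\cup\gamma_2$.
\item The orientations induce a partial order, so by the first step of the proof of Proposition~\ref{prop:3geodesic_catalogue} (the $\varphi$-construction of Proposition~\ref{prop:2geodesics}) all three curves are geodesics.
\end{itemize}
Now use your rigid layout: $\gamma_1$ is a solid segment on the axis and every $\gamma_2$-arch lies above it. The excursion of $\gamma_3$ from a $\gamma_2$-arch to a $\gamma_1$-only point of the next bubble must lie in the outer face. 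To get from above the segment to below it, that excursion has to go around an end of the segment; there are no ``gaps of $\gamma_1$'' to thread through. So infinitely many such excursions have images whose diameters are bounded below by the distance from $z$ to the nearer end of the segment. This contradicts continuity of $\gamma_3$ at $z$. Hence the layout of $\gamma_1\cup\gamma_2$ itself must adapt to $\gamma_3$, not just the routing of $\gamma_3$. Separately, checking for ``$K_{3,3}$- or $K_5$-type crossings'' would not be a valid criterion anyway, since $X$ need not be a graph.

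\textbf{The missing idea, which is how the paper argues, is to choose the side of each component of $X_1\setminus X_2$ individually.} Its noncomparable partner in $X_2\setminus X_1$ goes on the opposite side. The sides are chosen so that every component of $X_3\setminus(X_1\cup X_2)$ whose endpoints lie in comparable components has both endpoints on the same side. Since $X_3$ is monotone for the partial order, these constraints form a chain. They can therefore be satisfied inductively in both directions from an arbitrary initial choice. The drawing is then as follows:
\begin{itemize}
\item Every excursion with both endpoints on one side is drawn as a tent of slopes $\pm 2$, outside the image of $X_1\cup X_2$, over the vertical strip between its endpoints.
\item An excursion joining the two sides of a single bubble is drawn as a segment across that bubble.
\end{itemize}
Monotonicity of $\varphi$ along $X_3$ makes these strips pairwise disjoint. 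This gives injectivity with no finiteness assumption and no case-by-case bound on switching. The exceptional configurations (2a) and (2b) are embedded directly following Figure~\ref{fig:exceptional}.
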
 
In particular, any non-planar graph has extended cover number at least $4$. Propsition \ref{prop:3geodesic_catalogue} will also enable us to determine the extended cover number of $K_4$ in Proposition \ref{prop:K4}.

This project is partly inspired by the recent paper \cite{LRT:24} on so-called metric polygons. A \textit{metric polygon} is a metric space that can be written as the union of finitely many sets, each isometric to a closed interval, joined cyclically. It was shown that any metric triangle can be embedded in the plane under a bi-Lipschitz map with uniformly bounded distortion. It was asked as Question 1.5 in \cite{LRT:24} whether any metric quadrilateral is embeddable in the plane under a bi-Lipschitz mapping. We show in this paper that, while both basic non-planar graphs $K_5$ (the complete graph on five vertices) and $K_{3,3}$ (the ``three utilities graph'') have cover number $4$, these graphs nevertheless cannot be represented as a metric quadrilateral. See Section \ref{sec:K5} below. This provides evidence in favor of a positive answer to Question 1.5 of \cite{LRT:24}. 


Finally, we mention two questions for future work. For large graphs, it may be intractable to determine computational the exact cover number of a graph, and so one may be satisfied with useful upper or lower bounds. Proving upper bounds on the cover number or extended cover number is conceptually simple, only requiring the construction of an efficient geodesic cover. On the other hand, techniques to find lower bounds seem more difficult. 

\begin{ques}
    Are there broadly applicable methods to produce lower bounds on the cover number and extended cover number of a space?
\end{ques}

Next, we do not yet have any general method for computing the extended cover number of a graph. An affirmative answer to the following question, which asks for an analogue of Theorem \ref{thm:2subdivision}, would provide such a technique.

\begin{ques}
    Is there a general procedure to construct, for a given finite graph $X$, a larger finite graph $Y$ such that an optimal extended geodesic cover of $X$ necessarily appears as a subgraph of $Y$? 
\end{ques} 

Ideally, $Y$ should be as simple as possible. For example, can we assume that $Y$ is the complete graph on the vertices of a $2$-subdivision of $X$? Note that an extended geodesic cover of smallest size of $X$ can be much more complicated than the space $X$ itself. For example, the $3$-caterpillar graph described in Example \ref{exm:caterpillar} below can be covered by $2$ geodesics forming the sawtooth configuration depicted in Figure \ref{fig:caterpillar2} with an arbitrarily large number of teeth. We note that all the individual graphs considered in this paper have small cover number, allowing them to be handled through computation or \textit{ad hoc} arguments, for example, based on planarity.

The paper is structured as follows. Section \ref{sec:preliminaries} contains preliminaries on metric spaces and topological graphs needed for the paper. Section \ref{sec:examples} provides a few simple propositions and examples to illustrate the main features of the cover number and extended cover number. In Section \ref{sec:theory}, we develop techniques for determining the cover number of a graph, including the proof of Theorem \ref{thm:2subdivision}. Section \ref{sec:three_geodesics} concerns spaces with cover number $3$, culminating in the proof of Theorem \ref{thm:planar}. Section \ref{sec:computation} contains the results of our computer-assisted analysis of several graphs of interest.  Finally, there are two appendices; the first contains details of our computer implementation of an algorithm to find optimal geodesic covers, while the second includes a case analysis used in the proof of Proposition \ref{prop:3geodesic_catalogue}. 

\subsection*{Acknowledgments}

The authors are grateful to Anton Lukyanenko and the Mason Experimental Geometry Lab at George Mason University, where the authors first met and the main research for this paper was initiated.

\section{Preliminaries} \label{sec:preliminaries}

In this section, we review some of the basic concepts and fix our terminology. For more general background on metric geometry, we recommend the monograph of Bridson--Haefliger \cite{BH:99}.

We assume throughout this paper that all topological spaces are Hausdorff: for any two distinct points $x$ and $y$ in the space, there exist disjoint open sets containing $x$ and $y$, respectively. Given a topological space $X$, a \textit{metric} on $X$ is a continuous function $d \colon X \times X \to [0,\infty)$ satisfying the following properties for all $x, y, z \in X$: 
\begin{itemize}
    \item[(i)] Positive definiteness: $d(x,y) = 0$ if and only if $x = y$, 
    \item[(ii)] Symmetry: $d(x,y) = d(y,x)$, 
    \item[(iii)] Triangle inequality: $d(x,z) \le d(x,y) + d(y,z)$
\end{itemize}
While a space admits many possible metrics, we will be mainly concerned with the class of length metrics. We review the relevant terminology. A \textit{path} in $X$ is a continuous function $\gamma \colon [a,b] \to X$ for some $a<b$. The length of a path $\gamma$ is
\begin{equation} \label{equ:length}
    \ell(\gamma) = \sup \sum_{i=1}^n d(\gamma(t_{i-1}), \gamma(t_i)),
\end{equation} 
the supremum taken over all finite partitions $a = t_0 < t_1 < \ldots < t_n = b$. A metric $d$ is a \textit{length metric} if
\begin{equation} \label{equ:length_metric}
d(x,y) = \inf \ell(\Gamma),
\end{equation} 
the infimum taken over all paths $\gamma$ connecting $x$ and $y$. While Definitions \ref{defi:mgcn} and \ref{defi:emgcn} do not specify that the metrics should be a length metric, without loss of generality we may assume that this is the case. To see this, notice that if $d$ is a metric on $X$ in which $X$ is the union of finitely many geodesics, and $X$ is connected, then any pair of points in $X$ are connected by a path of finite length. The metric $d$ therefore induces a length metric $\bar{d}$ by defining $\bar{d}(x,y) = \inf \ell(\gamma)$, where the infimum is taken over all paths $\gamma$ connecting $x$ to $y$ as in \eqref{equ:length_metric}. It is easy to see that any geodesic in the original metric is still a geodesic in the induced length metric $\bar{d}$. If $X$ is not connected, we arrive at the same conclusion by allowing for $\bar{d}$ to take the value $\infty$ at some values. 

The spaces we consider in this paper are mainly topological graphs. A \textit{(topological) graph} is a pair $G = (V, E)$, where $V$ is the set of vertices (i.e., points) and $E$ is the set of edges (i.e., closed intervals whose endpoints are vertices). All graphs we consider are undirected. We identify $G$ with the topological space formed by the set of edges. More precisely, $G$ is the quotient space given by the collection of edges after identifying endpoints representing the same vertex. We do not assume that graphs are simple; that is, it is allowable for a pair of vertices to be joined by multiple edges. We also allow graphs to have loops (edges whose endpoints are the same vertex). We do not assume that a graph is connected.

The \textit{degree} (or \textit{valency}) of a vertex is the number of times that vertex appears as the endpoint of an edge. The \textit{maximum degree} of a graph is the maximum value of the degree over all vertices, provided this value exists.  

A length metric on a graph can be specified simply by assigning a length or weight $\ell_I$ to each edge $I$. To be more precise, for each edge $I$ we fix a homeomorphism $\varphi_I \colon I \to [0, \ell_I]$. For two points $x,y \in I$, we define $d_I(x,y) = |\varphi(x) - \varphi(y)|$. The length of a path is defined as follows: for a given partition, we can add points to ensure that any two consecutive points belong to the same edge. We can then define $\ell(\gamma)$ as in \eqref{equ:length} but using $d_I$, where $I$ is the common edge, in place of $d$. The metric $d$ on $X$ is then defined using \eqref{equ:length_metric}.

\section{First properties and examples} \label{sec:examples}

In this section, we present a few elementary facts and examples that illustrate the behavior of the metric geodesic cover number. First, we note a lower bound in terms of the degree of a vertex. 

\begin{prop} \label{prop:star}
    Let \(\Delta\) be the maximum degree of a finite graph \(X\). Then the extended geodesic cover number of \(X\) is at least \(\lceil \Delta/2\rceil\). In particular, the geodesic cover number of the \(n\)-star \(K_{1,n}\) is at least \(\lceil n/2\rceil\).
\end{prop}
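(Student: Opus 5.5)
Fix a vertex $v$ of $X$ of degree $\Delta$, a space $Y \supset X$ with a metric $d$, and geodesics $\gamma_1,\dots,\gamma_n$ in $Y$ with $X = \bigcup_i \gamma_i$. The plan is a local count at $v$. Choose a small neighborhood $U$ of $v$ in $X$ in which $X$ looks like a star: $v$ together with $\Delta$ pairwise disjoint half-open arcs $A_1,\dots,A_\Delta$ issuing from $v$. (A loop contributes two ends, which is why degree counts endpoint incidences.) We will show that each geodesic $\gamma_i$ can meet at most two of the germs of the $A_j$ at $v$. Then $\Delta \le 2n$, so $n \ge \lceil \Delta/2\rceil$. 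The claim about $K_{1,n}$ follows by taking $X = K_{1,n}$, with the center as $v$ and $Y = X$; the cover number is at least the extended cover number.

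First we reduce to germs. Each $\gamma_i$ is a geodesic, hence an isometric image of an interval, so its image is an arc (or a point). Fix a smaller neighborhood of $v$. For each $j$, the arc $A_j$ near $v$ is covered by the finitely many closed sets $\gamma_i \cap A_j$. Since these are finitely many closed sets covering $A_j$, some single $\gamma_i$ contains points of $A_j$ accumulating at $v$. I would strengthen this to the statement that some $\gamma_i$ contains an initial subarc of $A_j$ starting at $v$. To do this, use a Baire/closedness argument: the sets $\gamma_i \cap A_j$ are closed and finite in number, so one of them has nonempty interior in every initial segment. Alternatively, it is enough to work with accumulation, as below. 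Call such $\gamma_i$ responsible for $A_j$.

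The key step is that a single geodesic $\gamma = \gamma_i$ is responsible for at most two branches. Let $t_0$ be the parameter with $\gamma(t_0) = v$. This exists because $\gamma$ is closed and contains points of $A_j$ converging to $v$. Since $\gamma$ is injective, $\gamma([a,t_0))$ and $\gamma((t_0,b])$ are connected sets not containing $v$. Inside the star neighborhood $U$, every connected subset of $U \setminus \{v\}$ that reaches near $v$ lies in a single branch. The only catch is that $\gamma$ could leave $U$. However, the component of $\gamma((t_0,b]) \cap U$ whose closure contains $v$ is the image of an interval $(t_0,t_0+\epsilon)$, which is connected. So the part of $\gamma$ near $v$ on each side lies in one branch $A_j$. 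Therefore, points of $\gamma$ sufficiently close to $v$ lie in at most two branches.

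The main obstacle is handling $Y \ne X$. The geodesic lives in $Y$, but $\gamma \subset X$ since $X$ is the union of the geodesics. Hence the connectedness argument takes place entirely inside $X$, and $Y$ plays no role beyond that. The remaining care is topological: for every $\epsilon>0$, $\gamma((t_0,t_0+\epsilon))$ must be contained in the star neighborhood. This holds by continuity of $\gamma$ and openness of $U$ in $X$. Combining the steps, each branch germ is handled by some geodesic, and each geodesic handles at most two germs. This gives $\Delta \le 2n$.
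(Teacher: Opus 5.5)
Your proposal is correct and is essentially the paper's argument: near $v$ each geodesic, being an embedded arc through $v$, meets at most two of the $\Delta$ local branches of $X\setminus\{v\}$, while every branch must be met by some geodesic, so $\Delta \le 2n$. The accumulation version of ``responsible'' is all you need, since the Baire strengthening to an initial subarc is unnecessary and not justified as stated. Your observation that each geodesic lies inside $X$ is exactly what makes a general $Y$ harmless.
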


\begin{proof}
    Consider an arbitrary geodesic cover of \(X\) within a graph \(Y\) containing $X$. Let \(v\) be a vertex in \(X\) of degree \(\Delta\), let $\alpha$ be a geodesic in $Y$, and a homeomorphism \(f:[0,L]\to Y\) with \(\im(f)=\alpha\) and \(f(r)=v\) for some \(r\in[0,L]\). As \([0,1]\setminus \{r\}\) locally has at most two connected components around \(r\), so must \(\alpha\setminus\{v\}\) around \(v\). But the graph \(X\setminus\{v\}\) locally has \(\Delta\) connected components around \(v\) to cover.
\end{proof}

\begin{figure}[t!]
\centering
\captionsetup[subfloat]{width=4 cm, justification=centering}
\subfloat[Even number of leaves]{
    \begin{tikzpicture}[scale=0.30, every node/.style={transform shape, circle, draw=black!60, minimum size=1cm, font=\bfseries}]
    \useasboundingbox (-6,-6.5) rectangle (6,6);
        \node[fill=gray!40] (A) {};
        \node[fill=green!50] (B) [above left=3cm and 4cm of A] {};
        \node[fill=red!50] (C) [above right=3cm and 4cm of A] {};
        \node[fill=blue!50] (D) [below left=3cm and 4cm of A] {};
        \node[fill=orange!50] (E) [below right=3cm and 4cm of A] {};
        \node[fill=green!50] (F) [left=5cm of A] {};
        \node[fill=orange!50] (G) [right=5cm of A] {};
        \node[fill=red!50] (H) [above=5cm of A] {};
        \node[fill=blue!50] (I) [below=5cm of A] {};
        \draw[red!70, thick] (H) -- (A) -- (C);
        \draw[orange!70, thick] (G) -- (A) -- (E);
        \draw[blue!70, thick] (I) -- (A) -- (D);
        \draw[green!70, thick] (B) -- (A) -- (F);
    \end{tikzpicture}
}
\hspace{5 cm}
\subfloat[Odd number of leaves]{
    \begin{tikzpicture}[scale=0.30, every node/.style={transform shape, circle, draw=black!60, minimum size=1cm, font=\bfseries}]
        \useasboundingbox (-6,-6.5) rectangle (6,6);
        \node[fill=gray!40] (A) {};
        \node[fill=green!50] (B) [above left=3cm and 4cm of A] {};
        \node[fill=red!50] (C) [above right=3cm and 4cm of A] {};
        \node[fill=blue!50] (D) [below left=0.5cm and 5cm of A] {};
        \node[fill=orange!50] (E) [below right=0.5cm and 5cm of A] {};
        \node[fill=blue!50] (F) [below left=4.5cm and 2cm of A] {};
        \node[fill=orange!50] (G) [below right=4.5cm and 2cm of A] {};  
        \node[fill=red!50] (H) [above=5cm of A] {};
        \draw[red!70, thick] (H) -- (A) -- (C);  
        \draw[orange!70, thick] (G) -- (A) -- (E);  
        \draw[blue!70, thick] (F) -- (A) -- (D);  
        \draw[green!70, thick] (B) -- (A); 
    \end{tikzpicture}
}
\caption{Geodesic covers of the $n$-star.}
\label{fig:nstar-both}
\end{figure}
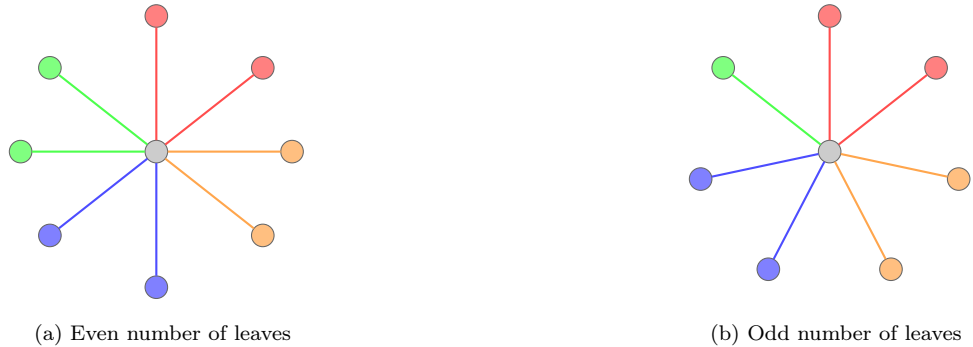

In the following, say a loop is \textit{isolated} if the vertex it is connected to has no other edges. For the following proposition, we recall that we do not assume a graph is connected. 

\begin{prop}
Let $X$ be a graph containing $m$ edges and \(k\) isolated self-loops. Then its geodesic covering number is at most $m+k$.
\end{prop}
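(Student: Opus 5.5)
The plan is to make every edge its own geodesic, except that an isolated self-loop is split into two geodesics. We fix the metric by assigning every edge length $1$, with the length metric described in Section \ref{sec:preliminaries}. Then we produce a cover using one geodesic per non-loop edge, one per non-isolated loop, and two per isolated loop. This gives $m+k$ geodesics in total.

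\textbf{Non-loop edges.} Let $I$ be an edge joining distinct vertices $u \neq v$. Since $u$ and $v$ are distinct, $I$ is an embedded arc of length $1$. We must check that it is a geodesic, i.e. that $d(x,y) = |\varphi_I(x)-\varphi_I(y)|$ for $x,y\in I$. Any path from $x$ to $y$ that leaves $I$ must exit through $u$ or $v$. If it exits through one endpoint and returns through the same endpoint, deleting the excursion does not increase length. If it exits through one endpoint and re-enters through the other, it has length at least $d_I(x,u)+d_I(v,y)+1$. This exceeds $d_I(x,y)$, because $d_I(x,u)+d_I(v,y) \ge 1 - d_I(x,y)$, so the total is at least $2 - d_I(x,y) \ge d_I(x,y)$. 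Hence $I$ is isometric to $[0,1]$. If several edges join $u$ and $v$, the same estimate applies, since any alternative route has length $\ge 1$.

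\textbf{Loops.} For a loop at a vertex $v$, we do not need the whole loop to be a geodesic; it suffices to cover it by geodesic arcs. If $v$ has another incident edge, the loop counts as one of the $m$ edges but, not being an arc, must still be covered. The main obstacle is that a closed loop can never be a single geodesic. One fix is to give loops length $1$ and cover each loop by its two halves $[0,1/2]$ and $[1/2,1]$ in the parametrization. Each half is a geodesic, since going around the other way costs at least $1/2$. This costs two geodesics per loop, so it only fits the count $m+k$ for isolated loops. For a non-isolated loop at $v$ we instead economize: we extend one half-loop geodesic through $v$ along a neighbouring non-loop edge. Concretely, shorten the loop to length $\varepsilon$ and cover it by two arcs of length $\varepsilon/2$. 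One arc we treat as its own geodesic; the other we concatenate with an adjacent edge geodesic emanating from $v$. The concatenation remains a geodesic because any shortcut would have to pass through $v$ anyway. The remaining delicate point is a vertex carrying several loops and only one other edge, or only loops and no other edge. In that case we pair loops off: half of loop $A$ is joined through $v$ to half of loop $B$. This is a geodesic provided the halves are short compared to everything else, since the only two points involved are on different loops and any path between them passes through $v$. Counting shows at most one extra geodesic is needed per vertex whose edges are all loops. Such a vertex with exactly one loop is precisely an isolated self-loop, which is counted in $k$. A vertex with $j\ge 2$ loops and nothing else needs $j$ geodesics (two pairings wrap around), which is $\le j$ edges.

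Finally, the cover is the union of these geodesics in a single length metric, and the count is at most $m+k$. The hardest step is the careful bookkeeping at vertices carrying loops; we handle it by choosing loop lengths small, so that all concatenations through a vertex are geodesics.
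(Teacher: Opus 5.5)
Your proposal is correct and is essentially the paper's argument: unit edge lengths, each non-loop edge as its own geodesic, each loop split at its midpoint with one half standalone and the other half appended through the vertex to the geodesic of a different incident edge (a non-loop edge or another loop's half), and one extra geodesic for each isolated loop. The paper packages your case-by-case pairing as a single fixed-point-free injection from the loops at each vertex to the edges at that vertex, which handles uniformly the situations you only sketch (such as $j$ loops and $2\le r<j$ non-loop edges at one vertex); shrinking the loops to length $\varepsilon$ is harmless but not needed.
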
 
\begin{proof}
Assign each edge a length of $1$. On all self-loops, add a geodesic that starts at its midpoint and ends at the vertex it is connected to, and cover all other edges by a single geodesic. For each vertex without an isolated self-loop, fix an injective function from its self loops to its edges such that no self loop is sent to itself. Extend any edge in the image of this function around the half of the self-loop that was not covered. For isolated self-loops, simply add a second edge covering the other half.
\end{proof}

The preceeding proposition is sharp; for $n$ odd, take a graph formed by $n$ loops on a single vertex. The next proposition considers the case when $X$ many vertices of degree $1$.

\begin{prop} \label{prop:tree}
    Let \(X\) be a graph with \(n\) vertices of degree 1. Then the geodesic cover number of \(X\) is at least $\lceil n/2\rceil$.
\end{prop}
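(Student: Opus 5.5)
The plan is to show that every degree-one vertex must be an endpoint of some geodesic in the cover. Since a cover by $k$ geodesics has at most $2k$ endpoints, this forces $2k \ge n$, that is, $k \ge \lceil n/2 \rceil$.

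Fix a metric on $X$ (a length metric, as justified in Section \ref{sec:preliminaries}) and a cover of $X$ by geodesics $\alpha_1,\dots,\alpha_k$. Each $\alpha_i$ is the image of an injective path $f_i \colon [0,L_i] \to X$, with endpoints $f_i(0)$ and $f_i(L_i)$. Let $v$ be a vertex of degree $1$, and let $e$ be its unique incident edge. Some $\alpha_i$ contains $v$, since the geodesics cover $X$. Write $v = f_i(r)$.

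The key step is to show that $r \in \{0, L_i\}$, following the local argument of Proposition \ref{prop:star}. Suppose instead that $0 < r < L_i$. Then for small $\epsilon>0$ the sets $f_i((r-\epsilon,r))$ and $f_i((r,r+\epsilon))$ are two disjoint connected subsets of $X\setminus\{v\}$, each with $v$ in its closure. However, $v$ has a neighborhood in $X$ homeomorphic to a half-open interval $[0,\delta)$ with $v$ corresponding to $0$. By continuity, for small $\epsilon$ both arcs $f_i([r-\epsilon,r])$ and $f_i([r,r+\epsilon])$ lie in this neighborhood. Each is then a nondegenerate connected subset of $[0,\delta)$ containing $0$, so each contains an interval $[0,\eta)$. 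The two arcs therefore intersect at points other than $v$, which contradicts the injectivity of $f_i$. Hence $v$ is an endpoint of $\alpha_i$.

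Since distinct vertices of degree $1$ are distinct points, each is an endpoint of some geodesic, and the map sending each such vertex to one of the endpoint slots $(i,\text{start})$ or $(i,\text{end})$ it occupies is injective. This gives $n \le 2k$, and the bound follows. I expect no real obstacle. The only point needing care is justifying that geodesics are injective arcs: a geodesic (a minimizing path parametrized by arc length) is an isometric embedding of an interval, so injectivity holds. Degenerate single-point geodesics cover nothing useful here and can be discarded, or they contribute endpoints trivially, which still respects the count of at most $2k$ endpoints.
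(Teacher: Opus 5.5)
Your proof is correct and follows the same route as the paper. A degree-one vertex has a neighborhood homeomorphic to a half-open interval, so it can only occur as an endpoint of a geodesic, and each geodesic has at most two endpoints; you have simply written out the local injectivity argument that the paper leaves implicit.
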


\begin{proof}
    If \(v\) is a vertex of degree 1, any neighborhood of \(v\) is locally homeomorphic to an endpoint of an interval. But any geodesic only has two such points (at its endpoints), and thus can only cover two such vertices.
\end{proof}

In particular, if $X$ is a tree with $n$ leaves, then the cover number of $X$ is at least $\lceil n/2\rceil$.

The following example shows that the cover number and extended cover number of a graph may be different, and in fact the gap between them may be arbitrarily large. 

\begin{exm} \label{exm:caterpillar}
    For each $n \in \mathbb{N}$, let \(X\) be the \textit{$n$-th caterpillar graph} formed by attaching an extra edge to each vertex of a simple path graph of length \(n\), as depicted in Figure \ref{fig:caterpillar1}. Then the cover number of $X$ is \(\lceil (n+1)/2\rceil\), while the extended cover number of \(X\) is 2.

    First, observe that $X$ has $n+1$ leaves, and so by Proposition \ref{prop:tree} $X$ has cover number at least \(\lceil (n+1)/2\rceil\). To see that the cover number is \(\lceil (n+1)/2\rceil\), we assign each edge length $1$. The first and last vertices are connected by a geodesic that spans the entire underlying simple path. We join the remaining pairs of leaves by a geodesic, possibly leaving a lone leaf at the end. We join this leaf to any point on the underlying simple path. This collection contains exactly \(\lceil (n+1)/2\rceil\) geodesics. We further note that Proposition \ref{prop:star} shows that the extended cover number is at least 2.

    For the upper bound, label the vertices in the bottom row in order by \(v_1,\ldots,v_n\) and those in the top row by \(u_1,\ldots,u_n\). By enlarging the space if needed, we may assume that $n$ is even. Let \(Y\) be the space \(X\) together with edges added between \(u_{2i-1}\) and \(u_{2i}\) for each \(1\leq i\leq n/2\) as shown in Figure \ref{fig:caterpillar2}. Now assign weight $3$ to each edge along the main path in \(Y\) and weight $1$ to each other edge. Let \(\alpha\) be the path traversing the bottom row. Let \(\beta\) be the concatenation of the paths \(\beta_i\) that visit vertex \(v_{2i-1}\), then \(u_{2i-1}\), then \(u_{2i}\), then \(v_{2i}\), then \(v_{2i+1}\) for each \(1\leq i\leq k\), leaving out the last edge from \(v_{2i}\) to \(v_{2i+1}\) when \(i=k\). It is easy to see that $\beta$ is a geodesic and that $\{\alpha,\beta\}$ form a geodesic cover of $X$ of size $2$.
    
\end{exm}

\begin{figure}[t!]
\centering
\captionsetup[subfloat]{width=12cm, justification=centering}
\subfloat[Caterpillar graph]{
    \begin{tikzpicture}[scale=0.5, every node/.style={transform shape, circle, draw=black!60, fill=black!30, minimum size=1cm, font=\bfseries}]
        \node (A) at (0,0) {};
        \node (B) [below = 3.25cm of A] {};
        \node (C) [right = 3.25cm of B] {};
        \node (D) [above = 3.25cm of C] {};
        \node (E) [right = 3.25cm of C] {};
        \node (F) [above = 3.25cm of E] {};
        \node (G) [right = 3.25cm of E] {};
        \node (H) [above = 3.25cm of G] {};
        \node (I) [right = 3.25cm of G] {};
        \node (J) [above = 3.25cm of I] {};
        \node (K) [right = 3.25cm of I] {};
        \node (L) [above = 3.25cm of K] {};
        \node (M) [right = 3.25cm of K] {};
        \node (N) [above = 3.25cm of M] {};
        \node (O) [right = 3.25cm of M] {};
        \node (P) [above = 3.25cm of O] {};
        \draw[very thick] (A) -- (B); 
        \draw[very thick] (B) -- (C);
        \draw[very thick] (C) -- (D);
        \draw[very thick] (C) -- (E);
        \draw[very thick] (E) -- (F); 
        \draw[very thick] (E) -- (G); 
        \draw[very thick] (G) -- (H); 
        \draw[very thick] (G) -- (I); 
        \draw[very thick] (I) -- (J); 
        \draw[very thick] (I) -- (K); 
        \draw[very thick] (K) -- (L); 
        \draw[very thick] (K) -- (M); 
        \draw[very thick] (M) -- (N); 
        \draw[very thick] (M) -- (O); 
        \draw[very thick] (O) -- (P); 
    \end{tikzpicture} \label{fig:caterpillar1}
}
\vspace{0cm} 
\subfloat[Extended caterpillar graph]{
    \begin{tikzpicture}[scale=0.5, every node/.style={transform shape, circle, draw=black!60, fill=black!30, minimum size=1cm, font=\bfseries}]
        \node (A) at (0,0) {};
        \node (B) [below = 3.25cm of A] {};
        \node (C) [right = 3.25cm of B] {};
        \node (D) [above = 3.25cm of C] {};
        \node (E) [right = 3.25cm of C] {};
        \node (F) [above = 3.25cm of E] {};
        \node (G) [right = 3.25cm of E] {};
        \node (H) [above = 3.25cm of G] {};
        \node (I) [right = 3.25cm of G] {};
        \node (J) [above = 3.25cm of I] {};
        \node (K) [right = 3.25cm of I] {};
        \node (L) [above = 3.25cm of K] {};
        \node (M) [right = 3.25cm of K] {};
        \node (N) [above = 3.25cm of M] {};
        \node (O) [right = 3.25cm of M] {};
        \node (P) [above = 3.25cm of O] {};
        \draw[red!90!black, very thick, line width = 3pt] (B) -- node[pos=0.5, sloped, above, font=\bfseries\Huge, draw=none, fill=none, inner sep=0pt] {1} (A);
        \draw[blue!90!black, very thick, line width = 3pt] (B) -- node[pos=0.5, sloped, below, font=\bfseries\Huge, draw=none, fill=none, inner sep=0pt] {3} (C);
        \draw[red!90!black, very thick, line width = 3pt] (D) -- node[pos=0.5, sloped, above, font=\bfseries\Huge, draw=none, fill=none, inner sep=0pt] {1} (C);
        \draw[red!90!black, very thick, line width = 6pt] (C) -- node[pos=0.5, sloped, above, font=\bfseries\Huge, draw=none, fill=none, inner sep=0pt] {3} (E); 
        \draw[blue!90!black, very thick, line width = 2.5pt] (C) -- node[pos=0.5, sloped, below, font=\bfseries\Huge, draw=none, fill=none, inner sep=0pt] {3} (E); 
        \draw[red!90!black, very thick, line width = 3pt] (E) -- node[pos=0.5, sloped, above, font=\bfseries\Huge, draw=none, fill=none, inner sep=0pt] {1} (F); 
        \draw[blue!90!black, very thick, line width = 3pt] (E) -- node[pos=0.5, sloped, below, font=\bfseries\Huge, draw=none, fill=none, inner sep=0pt] {3} (G); 
        \draw[red!90!black, very thick, line width = 3pt] (H) -- node[pos=0.5, sloped, above, font=\bfseries\Huge, draw=none, fill=none, inner sep=0pt] {1} (G); 
        \draw[red!90!black, very thick, line width = 6pt] (G) -- node[pos=0.5, sloped, above, font=\bfseries\Huge, draw=none, fill=none, inner sep=0pt] {3} (I); 
        \draw[blue!90!black, very thick, line width = 3pt] (G) -- node[pos=0.5, sloped, below, font=\bfseries\Huge, draw=none, fill=none, inner sep=0pt] {3} (I); 
        \draw[red!90!black, very thick, line width = 3pt] (I) -- node[pos=0.5, sloped, above, font=\bfseries\Huge, draw=none, fill=none, inner sep=0pt] {1} (J); 
        \draw[blue!90!black, very thick, line width = 3pt] (I) -- node[pos=0.5, sloped, below, font=\bfseries\Huge, draw=none, fill=none, inner sep=0pt] {3} (K); 
        \draw[red!90!black, very thick, line width = 3pt] (L) -- node[pos=0.5, sloped, above, font=\bfseries\Huge, draw=none, fill=none, inner sep=0pt] {1} (K); 
        \draw[red!90!black, very thick, line width = 6pt] (K) -- node[pos=0.5, sloped, above, font=\bfseries\Huge, draw=none, fill=none, inner sep=0pt] {3} (M); 
        \draw[blue!90!black, very thick, line width = 3pt] (K) -- node[pos=0.5, sloped, below, font=\bfseries\Huge, draw=none, fill=none, inner sep=0pt] {3} (M);
        \draw[red!90!black, very thick, line width = 3pt] (M) -- node[pos=0.5, sloped, above, font=\bfseries\Huge, draw=none, fill=none, inner sep=0pt] {1} (N); 
        \draw[blue!90!black, very thick, line width = 3pt] (M) -- node[pos=0.5, sloped, below, font=\bfseries\Huge, draw=none, fill=none, inner sep=0pt] {3} (O); 
        \draw[red!90!black, very thick, line width = 3pt] (P) -- node[pos=0.5, sloped, above, font=\bfseries\Huge, draw=none, fill=none, inner sep=0pt] {1} (O); 
        \draw[red!90!black, very thick, line width = 3pt] (A) -- node[pos=0.5, sloped, above, font=\bfseries\Huge, draw=none, fill=none, inner sep=0pt] {1} (D); 
        \draw[red!90!black, very thick, line width = 3pt] (F) -- node[pos=0.5, sloped, above, font=\bfseries\Huge, draw=none, fill=none, inner sep=0pt] {1} (H);
        \draw[red!90!black, very thick, line width = 3pt] (J) -- node[pos=0.5, sloped, above, font=\bfseries\Huge, draw=none, fill=none, inner sep=0pt] {1} (L);  
        \draw[red!90!black, very thick, line width = 3pt] (N) -- node[pos=0.5, sloped, above, font=\bfseries\Huge, draw=none, fill=none, inner sep=0pt] {1} (P);   
    \end{tikzpicture} \label{fig:caterpillar2}
}
\caption{The caterpillar graph and an extended geodesic cover of smallest size.}
\label{fig:caterpillar_stacked}
\end{figure}
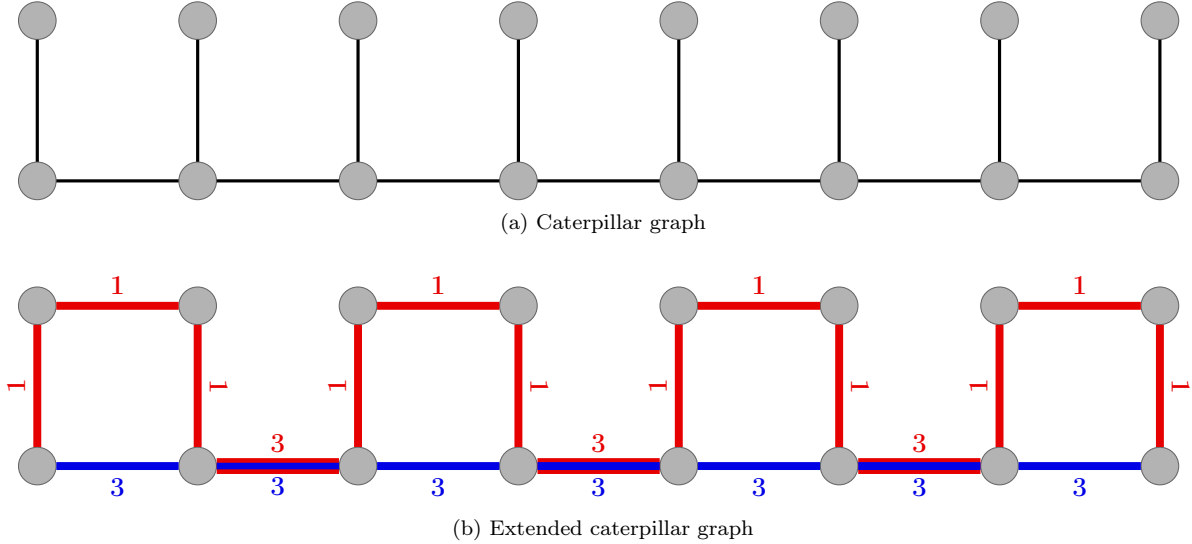

Next, what about the choice of length metric? We define the \textit{unweighted (geodesic) cover number} of a graph as in Definition \ref{defi:mgcn} but requiring each edge to have length $1$. The next example shows that the unweighted cover number may be larger than the cover number, and that the gap may be arbitrarily large.

\begin{exm}
For each $n \in \mathbb{N}$, let $X_n$ be the $n$-th \textit{sawtooth graph} formed by chaining $n$ triangles as in Figure \ref{fig:sawtooth}. The cover number of $X_n$ is two for all $n$, as can be seen by assigning a weight of $1$ to each lower edge and a weight of $1/2$ to each upper edge. On the other hand, the unweighted cover number of $X_n$ is $n$ for \(n\geq 2\).

To see the lower bound, note that no geodesic may cover both upper edges of a triangle. Thus each triangle must contain the endpoints of two geodesics. This forces \(2n\) endpoints, and thus \(n\) geodesics. To see the lower bound, label the upper edges \(u_1,\ldots,u_{2n}\) from left to right and the lower edges \(l_1,\ldots,l_n\) from left to right. For the first geodesic, start midway through \(u_1\), then proceed along all lower edges, ending midway through \(u_{2n}\). For the second geodesic, start again midway through \(u_1\), then proceed to \(u_2\), \(u_3\), and end midway through \(u_4\). For the \(i\)-th geodesic where \(i\geq 3\) (in the cases that \(n\geq 3\)), start midway through \(u_{2(i-1)}\), then proceed through \(u_{2i-1}\), then end midway through \(2i\).

\end{exm} 

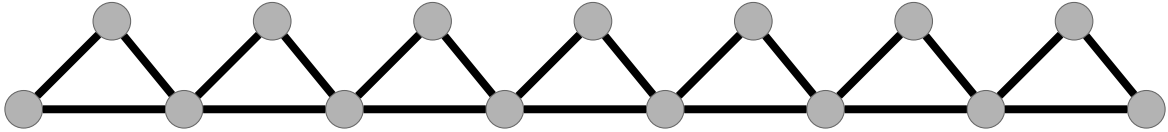
\begin{figure}[t!] 
\centering
    \begin{tikzpicture}[scale=0.5, every node/.style={transform shape, circle, draw=black!60, fill=black!30, minimum size=1cm, font=\bfseries}]
        \node (B) at (0,0) {};
        \node (A) [above right = 2.3cm of B] {};
        \node (C) [right = 3.25cm of B] {};
        \node (D) [above right = 2.3cm of C] {};
        \node (E) [right = 3.25cm of C] {};
        \node (F) [above right = 2.3cm of E] {};
        \node (G) [right = 3.25cm of E] {};
        \node (H) [above right = 2.3cm of G] {};
        \node (I) [right = 3.25cm of G] {};
        \node (J) [above right = 2.3cm of I] {};
        \node (K) [right = 3.25cm of I] {};
        \node (L) [above right = 2.3cm of K] {};
        \node (M) [right = 3.25cm of K] {};
        \node (N) [above right = 2.3cm of M] {};
        \node (O) [right = 3.25cm of M] {};
        \draw[very thick, line width = 3pt] (A) -- node[pos=0.5, sloped, above, font=\bfseries\Large, draw=none, fill=none, inner sep=0pt] {} (B); 
        \draw[very thick, line width = 3pt] (B) -- node[pos=0.5, sloped, above, font=\bfseries\Large, draw=none, fill=none, inner sep=0pt] {} (C);
        \draw[very thick, line width = 3pt] (C) -- node[pos=0.3, sloped, above, font=\bfseries\Large, draw=none, fill=none, inner sep=0pt] {} (D);
        \draw[very thick, line width = 3pt] (C) -- node[pos=0.5, sloped, above, font=\bfseries\Large, draw=none, fill=none, inner sep=0pt] {} (E); 
        \draw[very thick, line width = 3pt] (E) -- node[pos=0.5, sloped, above, font=\bfseries\Large, draw=none, fill=none, inner sep=0pt] {} (F); 
        \draw[very thick, line width = 3pt] (E) -- node[pos=0.5, sloped, above, font=\bfseries\Large, draw=none, fill=none, inner sep=0pt] {} (G); 
        \draw[very thick, line width = 3pt] (G) -- node[pos=0.5, sloped, above, font=\bfseries\Large, draw=none, fill=none, inner sep=0pt] {} (H); 
        \draw[very thick, line width = 3pt] (G) -- node[pos=0.5, sloped, above, font=\bfseries\Large, draw=none, fill=none, inner sep=0pt] {} (I); 
        \draw[very thick, line width = 3pt] (I) -- node[pos=0.5, sloped, above, font=\bfseries\Large, draw=none, fill=none, inner sep=0pt] {} (J); 
        \draw[very thick, line width = 3pt] (I) -- node[pos=0.5, sloped, above, font=\bfseries\Large, draw=none, fill=none, inner sep=0pt] {} (K); 
        \draw[very thick, line width = 3pt] (K) -- node[pos=0.5, sloped, above, font=\bfseries\Large, draw=none, fill=none, inner sep=0pt] {} (L); 
        \draw[very thick, line width = 3pt] (K) -- node[pos=0.5, sloped, above, font=\bfseries\Large, draw=none, fill=none, inner sep=0pt] {} (M); 
        \draw[very thick, line width = 3pt] (M) -- node[pos=0.5, sloped, above, font=\bfseries\Large, draw=none, fill=none, inner sep=0pt] {} (N); 
        \draw[very thick, line width = 3pt] (M) -- node[pos=0.5, sloped, above, font=\bfseries\Large, draw=none, fill=none, inner sep=0pt] {} (O);
        \draw[very thick, line width = 3pt] (C) -- node[pos=0.5, sloped, above, font=\bfseries\Large, draw=none, fill=none, inner sep=0pt] {} (A);
        \draw[very thick, line width = 3pt] (E) -- node[pos=0.5, sloped, above, font=\bfseries\Large, draw=none, fill=none, inner sep=0pt] {} (D);
        \draw[very thick, line width = 3pt] (G) -- node[pos=0.5, sloped, above, font=\bfseries\Large, draw=none, fill=none, inner sep=0pt] {} (F);
        \draw[very thick, line width = 3pt] (I) -- node[pos=0.5, sloped, above, font=\bfseries\Large, draw=none, fill=none, inner sep=0pt] {} (H);
        \draw[very thick, line width = 3pt] (K) -- node[pos=0.5, sloped, above, font=\bfseries\Large, draw=none, fill=none, inner sep=0pt] {} (J);
        \draw[very thick, line width = 3pt] (L) -- node[pos=0.5, sloped, above, font=\bfseries\Large, draw=none, fill=none, inner sep=0pt] {} (M);
        \draw[very thick, line width = 3pt] (O) -- node[pos=0.5, sloped, above, font=\bfseries\Large, draw=none, fill=none, inner sep=0pt] {} (N);
    \end{tikzpicture}
    \caption{Sawtooth graph} \label{fig:sawtooth}
\end{figure}







\section{Reductions for computing the metric geodesic cover number} \label{sec:theory}


Definition \ref{defi:mgcn} requires one to consider all possible geodesic covers of the space, which makes determining the cover number of a space \textit{a priori} a difficult problem. In this section, we prove Theorem \ref{thm:2subdivision}, which simplifies the picture by showing that we need only consider covers from a specific finite set of configurations. For a given cover, it is then routine to check (by computer) whether it is realizable as a collection of geodesics; one must check whether a set of linear constraints has a nonempty feasible region. 


We first introduce some terminology. Given a graph \(G\), the \textit{2-subdivision} of \(G\) is the graph \(G'\) formed by adding for each edge $(u,v)$ a new vertex $w$ and replacing \((u,v)\) with two edges \((u,w)\) and \((w,v)\).

Next, we say that a geodesic \(\alpha\) is \textit{proper} if there is no edge whose interior contains both endpoints of \(\alpha\). A geodesic cover of \(G\) is \textit{proper} if it consists only of proper geodesics. Given a geodesic cover $\Gamma$ of $G$, we say that a geodesic $\alpha \in \Gamma$ is \textit{retractable in $\Gamma$} if there is a neighborhood of an endpoint of $\alpha$ that is contained in the union of the other geodesics of $\Gamma$. Correspondingly, we say that $\Gamma$ is \textit{retracted} if it does not contain any retractable geodesics. It is easy to see that any geodesic cover $\Gamma$ can be made into a retracted geodesic cover by shortening (or removing altogether) any retractable geodesics as needed. 

\begin{figure}[t!]
\centering
\captionsetup[subfloat]{width=4cm, justification=centering}
\subfloat[Original geodesic cover]{
    \begin{tikzpicture}[scale=0.4, every node/.style={transform shape, circle, draw=black!60, fill=black!30, minimum size=1cm, font=\bfseries}]
        \useasboundingbox (-1,-7.5) rectangle (14,2);
        \node (A) {};
        \node (B) [below right = 2.5cm and 2.5cm of A] {};
        \node (C) [below = 5.5cm of A] {};
        \coordinate (D) at ($(B) + (2.5cm, 0cm)$);
        \node (E) [below right = 2.5cm and 9cm of A] {};
        \node (F) [right = 11.5cm of C] {};
        \draw[red!90!black, very thick] ($(D)+(0,0.5)$) -- ($(D)+(0,-0.5)$); 
        \draw [red!90!black, line width = 3pt] (A) -- (B);  
        \draw [red!90!black, line width = 6pt] (B) -- (D);
        \draw [blue!90!black, line width = 3pt] (C) -- (B);  
        \draw [blue!90!black, line width = 2.5pt] (B) -- (D);
        \draw [blue!90!black, line width = 3pt] (D) -- (E);
        \draw [blue!90!black, line width = 3pt] (E) -- (F); 
    \end{tikzpicture}
}
\hspace{2.5 cm}
\subfloat[Retracted geodesic cover]{
    \begin{tikzpicture}[scale=0.4, every node/.style={transform shape, circle, draw=black!60, fill=black!30, minimum size=1cm, font=\bfseries}]
        \useasboundingbox (-1,-7.5) rectangle (14,2);
        \node (A) {};
        \node (B) [below right = 2.5cm and 2.5cm of A] {};
        \node (C) [below = 5.5cm of A] {};
        \node (D) [below right = 2.5cm and 9cm of A] {};
        \node (E) [right = 11.5cm of C] {};
        \draw [red!90!black, line width = 3pt] (A) -- (B);  
        \draw [blue!90!black, line width = 3pt] (B) -- (C);  
        \draw [blue!90!black, line width = 3pt] (B) -- (D);
        \draw [blue!90!black, line width = 3pt] (D) -- (E);
    \end{tikzpicture}
}
\caption{Retracting a geodesic cover.}
\label{fig:geodesic-retraction}
\end{figure}
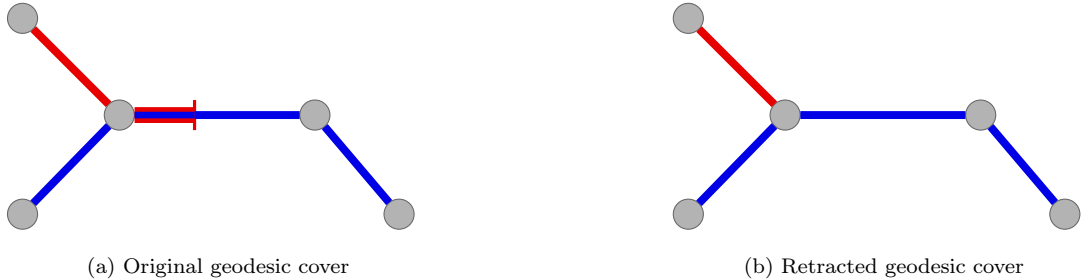




The proof of Theorem \ref{thm:2subdivision} relies on the following lemma.



\begin{lemm} \label{lemm:proper_cover}
    If \(G\) has a geodesic cover of size \(k\), then it has a proper retracted geodesic cover of size \(k\).
\end{lemm}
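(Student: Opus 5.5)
Starting from a geodesic cover $\Gamma$ of $G$ of size $k$ with respect to some length metric (weights $\ell_I$ on edges), we first pass to a retracted cover by the observation preceding the lemma: retractable geodesics can be shortened or deleted. This does not increase the size, and a shortened geodesic is still a geodesic. So the real task is to remove improper geodesics, i.e. those $\alpha$ whose two endpoints both lie in the interior of a single edge $I$, while keeping the cover retracted and of size at most $k$. If the size drops, we pad with trivial geodesics, or simply note that a smaller cover gives one of size $k$ by adding redundant proper geodesics. Any such added geodesic must then be checked for retractedness, so the cleaner move is to work with "at most $k$".

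Let $\alpha$ be improper with both endpoints interior to $I$. There are two possibilities.

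\textbf{Case 1: $\alpha \subset I$.} Then $\alpha$ is a subsegment of $I$. Retractedness means $\alpha$'s endpoint neighbourhoods are not covered by the other geodesics, so $\alpha$ is essential only on $I$. We replace $\alpha$ by extending it within $I$ to a full endpoint of $I$, or to the whole edge $I$ (a vertex-to-vertex segment). A segment of a single edge is a geodesic as long as $I$ is a shortest path between its endpoints. If it is not, we modify the metric instead: shrink $\ell_I$ suitably, or extend $\alpha$ only until it hits a point where it stops being minimizing. An arc of $I$ whose two endpoints are a vertex $u$ and an interior point is a geodesic once it is short enough relative to the rest. The cleanest fix is to replace $\alpha$ by a subarc starting at a vertex of $I$, which then has only one endpoint interior, and re-retract.

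\textbf{Case 2: $\alpha$ leaves $I$.} Then $\alpha$ starts in the interior of $I$, exits through one endpoint vertex $u$, travels through the graph, and returns through the other endpoint $v$ (or $u$ again, for a loop), ending inside $I$. Since $\alpha$ is a geodesic, the length of $\alpha$ is at most the distance along $I$ between its endpoints. Hence the portion of $I$ between the endpoints has length at least $\ell(\alpha)$. The key step is a modification: either replace $\alpha$ by the direct segment in $I$, reducing to Case 1, or cut $\alpha$ down. Because $\alpha$ restricted to $I\setminus\alpha$ leaves a middle gap of $I$ uncovered by $\alpha$, that gap must be covered by other geodesics of $\Gamma$. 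We would argue that $\alpha$'s two end pieces inside $I$ can each be trimmed back to the vertices $u$, $v$, or that the covering of the gap forces those pieces to be covered by the others as well, contradicting retractedness. The mechanism is that a geodesic covering part of the gap and entering from $u$ can be compared, via geodesic lengths, with $\alpha$'s segment near $u$.

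I expect the main obstacle to be making this trimming/replacement rigorous without breaking the geodesic property of other curves or increasing the count. The first thing I would try is an extremal argument: among all size-$\le k$ covers (over all weightings), choose one minimizing the number of improper geodesics and then the total length. I would show that any improper geodesic admits a local replacement, by re-routing to a vertex endpoint or perturbing the weight $\ell_I$, that contradicts this minimality. Finally, retraction is applied once more, noting that shortening an endpoint never creates a new improper geodesic unless both endpoints move into one edge. We would rule this out by retracting toward vertices.
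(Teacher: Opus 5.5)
There is a genuine gap, and it sits at the core of the lemma: your Case~1, an improper geodesic $\alpha$ lying in the interior of an edge $e=(u,v)$. None of the fixes you list works as stated.

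\begin{itemize}
\item Extending $\alpha$ to all of $e$ fails because $e$ need not be minimizing.
\item Shrinking $\ell_e$ changes the metric globally. A shorter edge can create shortcuts that destroy the geodesic property of other members of $\Gamma$, for instance those routed from $u$ to $v$ outside $e$.
\item Your ``cleanest fix'' does not preserve the cover. The maximal geodesic subarc $\alpha'=[u,y']$ of $e$ starting at $u$ can end strictly before the far endpoint of $\alpha$, since beyond $y'$ the route from $u$ around through $v$ is shorter. Part of $\alpha$ then becomes uncovered, and re-retracting cannot repair that. The same failure can happen simultaneously for the maximal subarc anchored at $v$.
\end{itemize}

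The missing idea is to compensate with a neighbouring geodesic. Let $\beta\in\Gamma$ contain a segment of $e$ ending at $v$. Since $\Gamma$ is retracted, $\beta$ cannot contain $\alpha$, so $\beta$ has an endpoint $w$ in the interior of $e$. Move $w$ to $y'$, retracting the other end of $\beta$ to $u$ if it also lies inside $e$. Then replace $\alpha,\beta$ by $\alpha',\beta'$; the count is unchanged and both new paths are proper.

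The real work is proving that $\beta'$ is still a geodesic when $w$ lies beyond $y'$, and this is where maximality of $\alpha'$ enters. By continuity, $d(u,y')=\ell(\alpha')$ is also realized by a path from $u$ through $v$ to $y'$. Hence any competitor $\gamma$ from $y'$ to the far endpoint $z$ of $\beta'$ that leaves $e$ through $u$ can be rerouted through $v$ at no cost. This gives $\ell(\gamma)\ge \ell([y',v])+d(v,z)=\ell(\beta')$, where $[y',v]$ is the subsegment of $e$. Your extremal reformulation (minimizing the number of improper geodesics) only restates that such a local replacement exists; it does not supply one.

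Your Case~2 also needs repair. Replacing $\alpha$ by the direct segment of $I$ between its endpoints discards the part of $\alpha$ outside $I$, so that option is unavailable. The retractedness contradiction you sketch is the right idea, but it only works after all Case~1 geodesics have been removed. A geodesic $[x,y]$ inside the interior of $I$ can exactly fill the gap of a Case~2 geodesic $\alpha$ with endpoints $x,y$; both are geodesics precisely when $d_I(x,y)=\ell(\alpha)$. Such a cover is retracted, with no contradiction, so the order of the two cases matters.

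Once no geodesic lies in an edge interior, the argument goes through as follows. Every geodesic meeting the interior of $e$ either contains $e$, in which case no other geodesic ends inside $e$, or covers a segment of $e$ starting at $u$ and/or one ending at $v$. Compare the $u$-side geodesic whose endpoint is closest to $v$ with the $v$-side geodesic whose endpoint is closest to $u$. Covering and retractedness force these to be distinct, to share a single interior endpoint, and to be the only geodesics entering the interior of $e$. Hence no geodesic has both endpoints inside $e$.
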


\begin{proof}
Let $\Gamma_0$ be a retracted geodesic cover of $G$. We apply the following procedure to replace, one by one, each nonproper geodesic in $\Gamma_0$ and its neighboring geodesics with the same number of proper geodesics covering the same subset of $G$. The set of non-proper geodesics can be split into two types, based on whether or not a geodesic is completely contained in the interior of its corresponding edge. These two possibilities are shown in Figure \ref{fig:improper-geodesics}. We apply the procedure to all nonproper geodesics of the first type; we will see that any geodesics of the second type are eliminated as a byproduct of this process. 

\begin{figure}[t!]
\centering
\captionsetup[subfloat]{width=4cm, justification=centering}
\subfloat[An improper geodesic contained in a single edge]{
    \begin{tikzpicture}[scale=0.4, every node/.style={transform shape, circle, draw=black!60, fill=black!30, minimum size=1cm, font=\bfseries}]
        \useasboundingbox (-1,-1) rectangle (11,3.5);
        \node (A) at (0,0) {};
        \coordinate (B) at (2.5,0);
        \coordinate (C) at (7.5,0);
        \node (D) at (10,0) {};
        \draw (A) -- (D);  
        \draw[red!90!black, very thick, line width=3pt] (B) -- (C); 
        \draw[red!90!black, very thick] ($(B)+(0,0.5)$) -- ($(B)+(0,-0.5)$); 
        \draw[red!90!black, very thick] ($(C)+(0,0.5)$) -- ($(C)+(0,-0.5)$); 
    \end{tikzpicture}
}
\hspace{3cm}
\subfloat[An improper geodesic crossing multiple edges]{
    \begin{tikzpicture}[scale=0.4, every node/.style={transform shape, circle, draw=black!60, fill=black!30, minimum size=1cm, font=\bfseries}]
        \useasboundingbox (-1,-1) rectangle (11,3.5);
        \node (A) at (0,0) {};
        \coordinate (B) at (2.5,0);
        \coordinate (C) at (7.5,0);
        \node (D) at (10,0) {};
        \draw (A) -- (D);  
        \draw[red!90!black, very thick, line width=3pt] (A) -- (B); 
        \draw[red!90!black, very thick, line width=3pt] (C) -- (D);  
        \draw[red!90!black, very thick] ($(B)+(0,0.5)$) -- ($(B)+(0,-0.5)$); 
        \draw[red!90!black, very thick] ($(C)+(0,0.5)$) -- ($(C)+(0,-0.5)$);
        \draw[red!90!black, very thick, line width=3pt, bend left=85] (A) to (D);
    \end{tikzpicture}
}
\caption{Examples of improper geodesics.}
\label{fig:improper-geodesics}
\end{figure}
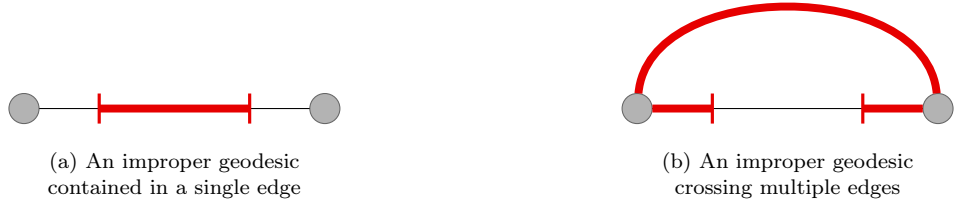


Let $\Gamma$ be a geodesic cover of $G$ appearing in some stage of this process. Let \(\alpha \in \Gamma\) be a nonproper geodesic. Assume that $\alpha$ is completely contained within the interior of the corresponding edge \(e\). We assume that $e$ is not a loop, i.e., the two endpoints of $e$ are distinct. The case where $e$ is a loop is handled by a similar but simpler argument. Fix compatible orientations on $e$ and $\alpha$; let $u$ be the initial point and $v$ the final point of $e$, and let $x$ be the initial point and $y$ the final point of $\alpha$. 

Now let $\alpha'$ be the maximal geodesic contained in $e$ having initial point $u$. Let $y'$ denote the other endpoint of $\alpha'$. Next, let $\beta$ be any geodesic in $\Gamma$ containing $v$ and intersecting the interior of $e$. Note that $\beta$ must have an endpoint $w$ in the interior of $e$. Form a new path from $\beta$ in the following way. First, we extend or shorten the endpoint $w$ to the point $y'$. Next, if the other endpoint of $\beta$ also lies in the interior of $e$, then retract this endpoint to $u$. Denote the resulting path by $\beta'$. See Figure \ref{fig:proper_cover} for an illustration of this configuration. Form the new collection of paths $\Gamma'$ by replacing $\alpha$ and $\beta$ with the paths $\alpha'$ and $\beta'$, and then retracting the other geodesics in $\Gamma$ as needed. Note that these retraction are of two types: retracting a geodesic crossing through $u$ with endpoint in $e$ back onto $u$, or removing any other nonproper geodesics contained in the edge $e$. These operations do not create any new nonproper geodesics. Observe that $\Gamma'$ is still a cover of $G$ of the same size as $\Gamma$, and $\alpha'$ by definition is a geodesic. Moreover, both $\alpha'$ and $\beta'$ are proper. It remains to show that $\beta'$ is a geodesic.

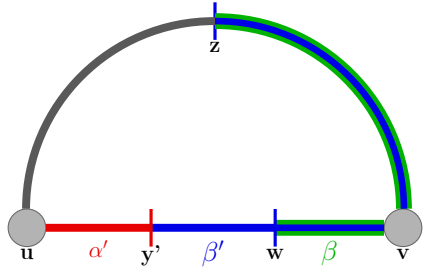
\begin{figure}[t!]
\centering
    \begin{tikzpicture}[scale=0.5, every node/.style={transform shape, circle, draw=black!60, fill=black!30, minimum size=1cm, font=\bfseries}]
        \node (A) at (0,0) {};
        \node[below=5pt, font=\bfseries\huge, fill=none, draw=none] at (A) {u};
        \coordinate (B) at (3.3,0);
        \node[below=5pt, font=\bfseries\huge, fill=none, draw=none] at (B) {y'};
        \coordinate (C) at (6.6,0);
        \node[below=5pt, font=\bfseries\huge, fill=none, draw=none] at (C) {w};
        \node (D) at (10,0) {};
        \node[below=5pt, font=\bfseries\huge, fill=none, draw=none] at (D) {v};
        \coordinate (E) at (5,5.5);
        \node[below=5pt, font=\bfseries\huge, fill=none, draw=none] at (E) {z};
        \draw (A) -- (D);  
        \draw[red!90!black, very thick, line width = 3pt] (A) -- (B)node[midway, below, draw=none, fill=none, font=\huge] {$\alpha'$}; 
        \draw[green!70!black, very thick, line width = 6pt] (C) -- (D)
        node[midway, below, draw=none, fill=none, font=\bfseries\Huge,] {$\beta$};
        \draw[blue!90!black, very thick, line width = 2.5pt] (C) -- (D);  
        \draw[blue!90!black, very thick, line width = 3pt] (B) -- (C) node[midway, below, draw=none, fill=none, font=\bfseries\Huge,] {$\beta'$};  
        \draw[red!90!black, very thick] ($(B)+(0,0.5)$) -- ($(B)+(0,-0.5)$); 
        \draw[blue!90!black, very thick] ($(C)+(0,0.5)$) -- ($(C)+(0,-0.5)$);
        \draw[blue!90!black, very thick] ($(E)+(0,0.5)$) -- ($(E)+(0,-0.5)$);
        \draw[gray!70!black, very thick, line width = 3pt, bend left = 45] (A) to (E);
        \draw[green!70!black, very thick, line width = 6pt, bend left = 45] (E) to (D);
        \draw[blue!90!black, very thick, line width = 2.5pt, bend left = 45] (E) to (D);
    \end{tikzpicture}
\caption{Configuration of geodesics in the proof of Lemma \ref{lemm:proper_cover}} \label{fig:proper_cover}
\end{figure}

If $w$ lies in $\alpha'$, then $\beta'$ is contained in $\beta$ and hence is a geodesic. So we may assume that $w$ is not contained in $\alpha'$. Let \(z\) be the endpoint of \(\beta'\) not within the interior of \(e\). Let $\gamma$ be an arbitrary path from $y'$ to $z$. If $\gamma$ passes through the point $v$, then it is immediate that $\ell(\gamma) \geq \ell(\beta')$ since the restriction of $\beta'$ from $v$ to $z$ is a geodesic, and the two paths coincide prior to reaching $v$.


Assume now that $\gamma$ passes through the point $u$. The maximality of $\alpha'$ implies that $y'$ and $u$ are joined by a second geodesic $\sigma$ that passes through $v$. Let $\gamma'$ be the concatenation of $\sigma$ and $\gamma|_{[u,z]}$; it is immediate that $\gamma$ and $\gamma'$ have the same length. But $\gamma'$ passes through $v$, so as above we have $\ell(\gamma) = \ell(\gamma') \geq \ell(\beta')$. We conclude that $\beta'$ is indeed a geodesic.

We now claim that, following this process, the resulting cover $\Gamma$ does not contain any nonproper geodesics of the second type. Consider now an edge $e = (u,v)$ in $G$. Since $\Gamma$ is retracted, if $e$ is entirely contained in a single geodesic, then no other geodesic may have an endpoint in the interior of $e$. Otherwise, there is a geodesic $\gamma_1$ through $u$ that has endpoint in $e$ closest to $v$ and a geodesic $\gamma_2$ through $v$ that has endpoint in $e$ closest to $u$. Since we have eliminated any nonproper geodesics contained in $e$, it follows that $\gamma_1$ and $\gamma_2$ together cover $e$. Since $\Gamma$ is retracted, $\gamma_1$ and $\gamma_2$ must share the same endpoint, and in particular it follows that $\gamma_1 \neq \gamma_2$. Moreover, no other geodesic may intersect the interior of $e$. The claim follows. 
\end{proof}

We continue with the proof of Theorem \ref{thm:2subdivision}.
\begin{proof}
    Let $\Gamma$ be an arbitrary geodesic cover of $G$. By Lemma \ref{lemm:proper_cover}, we can assume that $\Gamma$ is proper and retracted. In particular, for each edge $e =(u,v)$ one of the following holds: (1) $e$ is contained in a single geodesic and no other geodesic has an endpoint in the interior of $e$, or (2) there are two geodesics with common endpoint $x$ in the interior of $e$, with one geodesic passing through $u$ and the other passing through $v$. In the second case, we can identify $x$ with the vertex of the $2$-subdivision of $G$ in the interior of $e$. 
\end{proof}

    \section{Spaces that can be covered by three geodesics} \label{sec:three_geodesics}
    
    The case of three geodesics is simple enough that there is an explicit description of spaces with covering number three. This is done in Proposition \ref{prop:3geodesic_catalogue} below.

    An \textit{orientation} on a simple path $Y$ is a choice of initial point and end point; an orientation induces of an order on $Y$, denoted by $\prec$, where $x \prec y$ if, when traveling from the initial point of \(Y\) to its end point, we encounter \(x\) before \(y\). Two oriented paths $Y_1, Y_2$, with respective orders $\prec_1$ and $\prec_2$, are said to be \textit{compatibly oriented} if for all $x,y \in Y_1 \cap Y_2$, $x \prec_1 y$ if and only if $x \prec_2 y$. In this case, the two orders $\prec_1$ and $\prec_2$ induce a partial order on $Y_1 \cup Y_2$ compatible with the respective orders on $Y_1$ and $Y_2$. If $Y$ is a simple path containing the points $x,y$, we let $Y(x,y)$ denote the subpath of $Y$ between $x$ and $y$.  

    As a warmup, we have the following proposition concerning two geodesics. 

    \begin{prop} \label{prop:2geodesics}
        Let $X$ be a connected topological space that is the union of two subsets $X_1, X_2$ each homeomorphic to an interval. Then $X$ admits a length metric in which $X_1, X_2$ are geodesics if and only if $X_1, X_2$ admit compatible orientations.
    \end{prop}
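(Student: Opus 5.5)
The two directions are quite different in character, so I will treat them separately.

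For necessity, suppose $d$ is a length metric in which $X_1$ and $X_2$ are geodesics. Orient $X_1$ arbitrarily. The order along a geodesic is recovered from distances: for $x,y,z$ on a geodesic $Y$ with $x$ the initial point, $x\prec y$ if and only if $d(x,y)<d(x,z)$ for $z$ beyond $y$. More usefully, for three points $p,q,r$ on a geodesic, $q$ lies between $p$ and $r$ if and only if $d(p,r)=d(p,q)+d(q,r)$.

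Betweenness on $X_1\cap X_2$ is therefore the same whether it is computed in $X_1$ or in $X_2$. Indeed, if $q$ is between $p$ and $r$ in $X_1$ but not in $X_2$, then in $X_2$ one of $p,r$ (say $p$) lies between the other two. This gives both $d(p,r)=d(p,q)+d(q,r)$ and $d(q,r)=d(q,p)+d(p,r)$, which forces $d(p,q)=0$, a contradiction.

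A linear order on a set is determined up to reversal by its betweenness relation. So after possibly reversing the orientation of $X_2$, the two orders agree on $X_1\cap X_2$. The sets $X_i$ are compact, being homeomorphic to closed intervals, and there is no subtlety with an intersection of fewer than two points, where any orientations work. Hence compatible orientations exist.

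For sufficiency, assume the orientations are compatible, and parametrize $X_i$ by homeomorphisms $\phi_i\colon X_i\to[0,1]$ respecting the orientations. The intersection $K=X_1\cap X_2$ is closed, and $h=\phi_2\circ\phi_1^{-1}$ is an increasing homeomorphism from $\phi_1(K)$ onto $\phi_2(K)$. The plan is to reparametrize $X_2$ so that $h$ becomes a translation, i.e.\ so that the arc lengths of corresponding subarcs agree.

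To do this, define a new parametrization $\psi_2$ of $X_2$ satisfying $\psi_2=\phi_1+c$ on $K$, extended monotonically over the complementary gaps. On each gap of $\phi_2(K)$, which is an open interval whose endpoints correspond to a gap of $\phi_1(K)$, the extension should be linear. Then $\psi_2$ is an increasing homeomorphism onto its image.

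Put the length metric on $X$ in which $X_1$ has arc length $\phi_1$ and $X_2$ has arc length $\psi_2$. These agree on $X_1\cap X_2$, since on each common subarc both measure the same length. Define $d(x,y)$ as the infimum of lengths of paths in $X$. The step that needs care is showing that $d$ induces the original topology. Here I would use that $X$ is a quotient of the compact set $X_1\sqcup X_2$, together with the continuity of the lengths.

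It then remains to show each $X_i$ is geodesic, meaning that $d(x,y)$ equals the parameter difference for $x,y\in X_i$. Define $F\colon X\to\mathbb{R}$ by $F=\phi_1$ on $X_1$ and $F=\psi_2-c$ on $X_2$. This is well defined on $K$, and it is $1$-Lipschitz along each piece. Any path from $x$ to $y$ therefore has length at least $|F(x)-F(y)|$, which is exactly the length of the subarc of $X_i$ between them.

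The hard part is the construction of $\psi_2$. The map $h$ may be wild, for instance on a Cantor set $K$, and matching lengths exactly on $K$ while staying a homeomorphism with finite total length needs checking. Since $\phi_1$ is continuous on $K$ and each gap is filled linearly, continuity and monotonicity should follow directly. Note also that the remaining parts of $X_1$ and $X_2$, before the first and after the last common points, are attached only at their ends, so they impose no constraint.
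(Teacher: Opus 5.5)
Your proposal is correct. Sufficiency is essentially the paper's argument, and necessity takes a different, more conceptual route.

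\textbf{Sufficiency.} Your $F$ ($=\phi_1$ on $X_1$, $=\psi_2-c$ on $X_2$) is exactly the paper's map $\varphi$. Linear interpolation over the gaps is one admissible instance of the paper's ``arbitrary embedding'' on each component of $X_2\setminus X_1$. In both proofs the geodesic property comes from $\ell(\gamma)\ge|F(x)-F(y)|$, once length is defined as the total variation of $F\circ\gamma$. That is how your phrase ``the length metric in which $X_i$ has arc length \dots'' should be made precise, since paths may switch between $X_1$ and $X_2$ infinitely often. Letting $F$ be real-valued, with free linear extension on the end pieces, also sidesteps a small slip in the paper. The paper's normalization $\varphi(a_2)=0$, $\varphi(b_2)=1$ is incompatible with injectivity when, e.g., $a_1\in X_2$ and $X_2$ continues past $a_1$.

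\textbf{Necessity.} The paper anchors at the first and last points $a,b$ of $X_1\cap X_2$ and orients $X_2$ by $a\prec_2 b$. It then rules out an inversion $x\prec_1 y$, $y\prec_2 x$ with two explicit chains of strict length inequalities. You instead observe that betweenness on $X_1\cap X_2$ is metrically defined by $d(p,r)=d(p,q)+d(q,r)$, and then use the fact that betweenness determines a linear order up to reversal. Your version avoids choosing anchors, at the cost of quoting that order-theoretic fact. It is easy to prove by fixing $p\prec q$: for $z\ne p,q$, $z\prec p$ exactly when $p$ lies between $z$ and $q$, and two points on the same side of $p$ are compared by betweenness with $p$.

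\textbf{Topology.} You are right that one must check that $d$ induces the original topology; the paper simply calls this immediate. Your sketch omits positive definiteness. It holds because $d(x,y)\ge|F(x)-F(y)|$, which leaves only the case $F(x)=F(y)$ with $x\in X_1\setminus X_2$, $y\in X_2\setminus X_1$. There every path from $x$ to $y$ must meet the compact set $K$, on which $F$ stays a positive distance from $F(x)$ because $F|_{X_1}$ is injective. Continuity of $d$ follows from $d(x,y)\le|F(x)-F(y)|$ inside each $X_i$. Then the identity from the compact space $X$ to $(X,d)$ is a homeomorphism.
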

    \begin{proof}
        Assume first that $X$ admits a length metric in which $X_1, X_2$ are geodesics. If $X_1, X_2$ intersect in a single point, then any orientations are trivially compatible. Assume then that $X_1$ and $X_2$ intersect in at least two points. Choose an orientation on $X_1$, with $\prec_1$ the corresponding order. Let $a$ be the first point in $X_1 \cap X_2$ according to this orientation and $b$ the last point in $X_1 \cap X_2$. We obtain an orientation, with corresponding order $\prec_2$, on $X_2$ by declaring $a \prec_2 b$.

        Suppose that there exist points $x,y \in X_1 \cap X_2$ such that $x \prec_1 y$ but $y \prec_2 x$. Note that, by the definition of $a$ and $b$, it necessarily holds that $a \preceq_1 x \prec_1 y \preceq_1 b$. If $y \prec_2 a$, then we would get the contradiction
        \[d(y,b) = \ell(X_2(y,b)) > \ell(X_2(a,b)) = \ell(X_1(a,b)) > \ell(X_1(y,b)) = d(y,b). \]
        Thus we see that $a \preceq_2 y$. We next immediately get the contradiction
        \[d(a,y) = \ell(X_2(a,y)) < \ell(X_2(a,x)) = \ell(X_1(a,x)) < \ell(X_1(a,y)) = d(a,y). \]
        We conclude that any two points $x,y$ satisfy $x \prec_1 y$ if and only if $x \prec_2 y$, and so $X_1, X_2$ have compatible orientations.

        For the reverse implication, assume that $X_1, X_2$ admit compatible orientations $\prec_1, \prec_2$. Denote the endpoints of $X_i$ by $a_i,b_i$ for each $i=1,2$. If $X_1 \cap X_2$ contains zero or one points, then any choice of length metric works. So assume $X_1 \cap X_2$ contains at least two points. Fix a homeomorphism $\varphi \colon X_1 \to [0,1]$ such that $\varphi^{-1}(0) \prec_1 \varphi^{-1}(1)$. We extend the definition of $\varphi$ to all of $X$ as follows. Each component of $X_2 \setminus X_1$ is an open or half-open interval. Let $I$ be a component that is an open interval, with endpoints $a,b \in X_1 \cap X_2$. Define $\varphi$ on $I$ to be an arbitrary embedding into $[0,1]$ with $\lim_{x \to a} \varphi(x) =\varphi(a)$ and $\lim_{x \to b} \varphi(x) =\varphi(b)$. We define $\varphi$ similarly if $I$ is a half-closed interval, with the requirement that $\varphi(a_2) =0$ and $\varphi(b_2) = 1$. We now let $d$ be the length metric on $X$ induced by the map $\varphi$. That is, we define the length of a path $\gamma$ as
        \begin{equation} \label{eq:length}
            \ell(\gamma) = \sup \sum_{i=1}^n |\varphi(\gamma(t_{i-1})) - \varphi(\gamma(t_i)))|,
        \end{equation}
        the supremum taken over all finite partitions $t_0 < t_1 < \ldots < t_n$ of the domain of $\gamma$. We then $d(x,y) = \inf \ell(\gamma)$, the infimum taken over all paths $\gamma$ from $x$ to $y$. It is immediate from construction that $d$ is a length metric on $X$.

        It remains to show that $X_1$ and $X_2$ are geodesics in this metric. Observe that $X_1$ itself has length $1$ by construction. Now let $\gamma$ be an arbitrary path from $a_1$ to $b_1$. Now $\varphi(\gamma)$ is a surjection onto $[0,1]$, from which it follows immediately from \eqref{eq:length} and the triangle inequality that $\ell(\gamma) \geq 1$. We conclude that $\ell(X_1) = 1 = d(a_1,b_1)$, and hence that $X_1$ is a geodesic. 

        Next, we claim that $X_2$ has length $\varphi(b_2) - \varphi(a_2)$. Let $t_0 < t_1 < \cdots < t_n$ be an arbitrary partition of the domain of $X_2$. It follows from the orientation assumption that $\varphi(X_2(t_{i-1})) < \varphi(X_2(t_{i}))$ for all $i$. Thus we see from \eqref{eq:length} that $X_2$ has length $\varphi(b_2) - \varphi(a_2)$. Finally, if $\gamma$ be an arbitrary path from $a_2$ to $b_2$, the same argument as in the last paragraph shows that $\ell(\gamma) \geq \varphi(b_2) - \varphi(a_2)$. We conclude that $\ell(X_2) = d(a_2,b_2)$, and hence that $X_2$ is a geodesic.  
    \end{proof}

    The next proposition gives a description of topological spaces that are the union of three geodesics. First, we state some terminology. Let $X_1, X_2$ be two paths in a topological space $X$. We say a component $U$ of $X_1 \setminus X_2$ is an \textit{end component} if it is homeomorphic to half-open interval $[0,1)$. Note that the image of $0$ in $X_1$ under the homeomorphism is necessarily an endpoint of $X_1$. By convention, we allow the end components to be empty, so that we can say they always exist. Next, assume that $U$ is a component of $X_1 \setminus X_2$ and $V$ is a component of $X_2 \setminus X_1$, where $X_1$ and $X_2$ are compatibly oriented. We say that $U$ and $V$ are \textit{comparable} if every point in one set is greater than or equal to every point in the other according to induced partial order on $X_1 \cup X_2$.
 
    \begin{prop} \label{prop:3geodesic_catalogue}
        Let $X$ be a connected topological space that is the union of three subsets $X_1, X_2, X_3$ each homeomorphic to an interval. Then $X$ admits a length metric in which $X_1, X_2, X_3$ are each geodesics if and only if $X_1, X_2, X_3$ can be given orientations such that one of the following holds (after relabeling):
        \begin{enumerate}
            \item The orientations on $X_1, X_2, X_3$ induce a compatible partial order on $X$.
            \item  The space $X$ has one of two possible exceptional configurations as shown in Figure \ref{fig:exceptional}. \label{item:non_oriented}
            \begin{enumerate}
                \item Each pair $X_i, X_j$ ($i \neq j$) is mutually compatibly oriented. Let $U$ be the last end component of $X_1 \setminus X_2$ and $V$ the first end component of $X_2 \setminus X_1$. Then $X_1 \cap X_3 \subset U$ and $X_2 \cap X_3 \subset V$.
                \item The pairs $X_1, X_2$ and $X_1, X_3$ are compatibly oriented, while the pair $X_2,X_3$ is not. There are noncomparable components $U$ of $X_1 \setminus X_2$ and $V$ of $X_2 \setminus X_1$ with common last endpoint $e$ such that $X_1 \cap X_3 \subset U \cup \{e\}$ and $X_2 \cap X_3 \subset V \cup \{e\}$.
            \end{enumerate} 
        \end{enumerate} 
    \end{prop}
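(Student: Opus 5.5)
The argument splits into the two implications. For necessity, I first apply Proposition \ref{prop:2geodesics} to each pair: if $X_1,X_2,X_3$ are geodesics for some length metric, every pair $X_i,X_j$ meeting in at least two points admits compatible orientations. Orienting $X_1$ arbitrarily then fixes orientations of $X_2$ and $X_3$ compatible with $X_1$ (when the intersections have at least two points; otherwise the choice is free and is made at the end). Two situations remain. If $X_2,X_3$ are also compatibly oriented, I ask whether the three pairwise orders generate an acyclic relation on $X$. If they do, we are in case (1). If they do not, I have to produce the configuration (2a). If $X_2,X_3$ are not compatibly oriented, I have to produce (2b).

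The main obstacle is exactly this analysis of a cycle in the induced relation. A cycle can only use three comparisons, one from each pair, so there are points $p\in X_1\cap X_2$, $q\in X_2\cap X_3$, $r\in X_3\cap X_1$ with $p\prec_2 q\prec_3 r\prec_1 p$, or a degenerate version of this. I would play this against the geodesic equalities. For instance, $d(p,r)=\ell(X_1(r,p))$, and there is a competing path from $p$ to $r$ through $q$ along $X_2$ and then $X_3$. Writing $d(p,q)=\ell(X_2(p,q))$ and $d(q,r)=\ell(X_3(q,r))$ and combining with the triangle inequality gives length comparisons. These comparisons should force every point of $X_1\cap X_3$ to lie beyond the last point of $X_1\cap X_2$, that is, in the terminal end component $U$ of $X_1\setminus X_2$. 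They should likewise force $X_2\cap X_3$ into the initial end component $V$. This is the configuration (2a).

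In the noncompatible case, take $s,t\in X_2\cap X_3$ ordered oppositely by $X_2$ and $X_3$. Mimicking the two displayed chains of inequalities in Proposition \ref{prop:2geodesics} gives strict length contradictions unless the geodesic $X_2$ avoids $X_3$ between $s$ and $t$ by passing through $X_1$. Pushing this further should localize everything to a pair of noncomparable components $U$, $V$ meeting at a common last endpoint $e$, which is case (2b). I expect to need a case analysis on where $p,q,r$ fall relative to the components of $X_i\setminus X_j$. I would organize it by the positions of the first and last points of each pairwise intersection, as in the proof of Proposition \ref{prop:2geodesics}.

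For sufficiency, case (1) generalizes the construction in Proposition \ref{prop:2geodesics}. Extend the partial order to a total preorder and choose a continuous map $\varphi\colon X\to\mathbb{R}$ that is strictly increasing along each $X_i$; this is possible because the order is acyclic. Define lengths by \eqref{eq:length}. Then each $X_i$ has length $\varphi(b_i)-\varphi(a_i)$, and any competing path has at least this length because it must traverse the $\varphi$-interval. The exceptional configurations need explicit metrics. In (2a), $X_3$ runs from the tail of $X_1$ back to the head of $X_2$. I take $\varphi$ as in case (1) for $X_1\cup X_2$ and make $X_3$ short relative to the detours available through $X_1\cap X_2$: give $U$ and $V$ large $\varphi$-length and $X_3$ small length. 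Then $X_3$ is a geodesic, and $X_1$ and $X_2$ remain geodesics because any shortcut through $X_3$ would have to re-enter through the long end components. Case (2b) is handled the same way with a short $X_3$ joining $U$ and $V$ near $e$. In both exceptional cases the remaining work is to check a finite list of path inequalities.
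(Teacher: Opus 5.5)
Your overall architecture matches the paper's: pairwise compatibility from Proposition \ref{prop:2geodesics}, reduction to a $3$-cycle or to a pair $X_2,X_3$ ordered oppositely, a case analysis, then explicit metrics. However, the metric you propose for the exceptional configuration (2a) does not work, and the reason you give for it is wrong.

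In (2a), let $I_{12}$ be the part of $X_1\cup X_2$ between the first and last points of $X_1\cap X_2$, and define $I_{13}$ and $I_{23}$ similarly. Let $g_1\subset U$ be the arc of $X_1$ from $I_{12}$ to $I_{13}$, let $g_3$ be the arc of $X_3$ from $I_{13}$ to $I_{23}$, and let $g_2\subset V$ be the arc of $X_2$ from $I_{23}$ back to $I_{12}$. These six pieces form a cycle $I_{12},g_1,I_{13},g_3,I_{23},g_2$. Each $X_i$ runs over three consecutive pieces, and the other three pieces form a competing path between the same endpoints. Write $L_{ij}$ for the common length of $X_i$ and $X_j$ across $I_{ij}$, and use $g_k$ also for the length of that arc. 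Geodesicity of $X_1$ and $X_2$ then forces
\[
L_{12}+g_1+L_{13}\le g_3+L_{23}+g_2, \qquad L_{23}+g_2+L_{12}\le g_1+L_{13}+g_3 .
\]
Adding these gives $L_{12}\le g_3$ for \emph{every} admissible metric. The lengths of $U$ and $V$ cancel, and the connecting arc of $X_3$ must be at least as long as the $X_1$--$X_2$ overlap.

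So taking $\varphi$ on $X_1\cup X_2$ as in case (1), making $U,V$ long and $X_3$ short, fails whenever $g_3<L_{12}$. For example, take $g_1=g_2=G$, $L_{13}=L_{23}=M$ and $g_3=\varepsilon<L_{12}$. Going from the first point of $X_1\cap X_2$ backward through $V$, $I_{23}$ and $g_3$ reaches the last point of $X_1\cap X_3$ in length $G+M+\varepsilon$. This is shorter than the $X_1$-route of length $L_{12}+G+M$. Your heuristic that a shortcut ``must re-enter through the long end components'' overlooks that $X_1$ itself runs through the long component $U$. What is actually required is that each $X_i$ covers at most half of this cycle. The paper simply gives all six pieces length $1$, so each geodesic is exactly half the cycle.

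For necessity, everything after the reduction is stated as what the inequalities ``should'' force, and that is where the whole content of the proposition lies. The paper carries out this step by fixing finitely many witness points ($a,b,c,e,f$ in Group 1, and $a,b,c,e,f,g$ in Group 2). It then enumerates the $21$ and $108$ possible orderings, discards all but a few by LP infeasibility, and checks that the survivors are (2a) or (2b). Hand derivations of the kind you describe do work case by case. For instance, the ordering $a\prec_1 b\prec_1 e\preceq_1 f$, $b\prec_2 c\prec_2 f$, $c\prec_3 a\prec_3 e$ is ruled out by comparing $d(c,e)$ with the route through $b$, and $d(a,f)$ with the route through $c$, using $\ell(X_1(b,f))=\ell(X_2(b,f))$. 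But you have to actually run through all the cases.

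Your stated targets are also partly off:
\begin{itemize}
\item With your cycle $p\prec_2 q\prec_3 r\prec_1 p$, the point $r$ precedes $p$ in $X_1$. So $X_1\cap X_3$ must land before the first point of $X_1\cap X_2$, not beyond the last, and $X_2\cap X_3$ after the last. This is (2a) only after reversing all orientations.
\item In (2b), $X_2$ between two points of $X_2\cap X_3$ stays in $V\cup\{e\}$ and meets $X_1$ at most at $e$. So the chains from Proposition \ref{prop:2geodesics} only show that $X_3$ runs backward along $X_2$ on all of $X_2\cap X_3$.
\end{itemize}
Localizing $X_1\cap X_3$ and $X_2\cap X_3$ to a single noncomparable pair $U,V$ with common last endpoint is exactly what the Group 2 analysis must supply.
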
 

    \begin{figure}[t!] 
    \centering
    \hfill 
    \subfloat[]{
    \begin{tikzpicture}[scale=.5] 
    \draw[purple, line width = 11pt, opacity = .2] (0,-.0) to (0,4.);
    \draw[red, line width = 11pt, opacity = .2] (.4,-.4) to (2.5,-2.5);
    \draw[blue, line width = 11pt, opacity = .2] (-.4,4.4) to (-2.5,6.5);
    \draw[green!70!black, line width = 2.5pt] (1.9,1.1) to (.4,-.4) .. controls (.25, -.95) and (.55, -1.25) .. (1.1,-1.1) .. controls (.95,-1.65) and (1.25, -1.95) .. (1.8, -1.8) .. controls (1.65, -2.35) and (1.95, -2.65) .. (2.5, -2.5) .. controls (3.75,-2.2) and (4,-1.5) .. (4, 0) to (4,7) .. controls (4,7.6) and (3.6,8) .. (3,8) to (0,8) .. controls (-1.5, 8) and (-1.5, 7.5) .. (-2.5, 6.5) .. controls (-1.95,6.65) and (-1.65,6.35) .. (-1.8, 5.8).. controls (-1.25, 5.95) and (-.95, 5.65) .. (-1.1, 5.1) .. controls (-.55, 5.25) and (-.25, 4.95) .. (-.4, 4.4) to (-1.9,2.9) ; 
    \draw[blue!90!black, line width = 2.5pt] (-3,-3) to (0,0) .. controls (-.5,.29) and (-.5,.71) .. (0,1) .. controls (-.5,1.29) and (-.5,1.71) .. (0,2) .. controls (-.5,2.29) and (-.5,2.71) .. (0,3) .. controls (-.5,3.29) and (-.5,3.71) .. (0,4) to (-3,7);
    \draw[red!90!black, line width = 2.5pt] (3,-3) to (0,0) .. controls (.5,.29) and (.5,.71) .. (0,1) .. controls (.5,1.29) and (.5,1.71) .. (0,2) .. controls (.5,2.29) and (.5,2.71) .. (0,3) .. controls (.5,3.29) and (.5,3.71) .. (0,4) to (3,7);
    \draw[draw=black!60, fill=black!30] (0,0) circle (6pt);
    \draw[draw=black!60, fill=black!30] (.4,-.4) circle (6pt);
    \draw[draw=black!60, fill=black!30] (2.5,-2.5) circle (6pt);
    \draw[draw=black!60, fill=black!30] (0,4) circle (6pt);
    \draw[draw=black!60, fill=black!30] (-.4,4.4) circle (6pt);
    \draw[draw=black!60, fill=black!30] (-2.5,6.5) circle (6pt);
    \end{tikzpicture}
    \label{fig:exceptional_1}}
    \hfill 
    \subfloat[]{
    \begin{tikzpicture}[scale=.5] 
    \draw[purple, line width = 11pt, opacity = .2] (0,-1) to (0,2.);
    \draw[purple, line width = 11pt, opacity = .2] (0,7) to (0,10);
    \draw[red, line width = 11pt, opacity = .2] (0,7) to (2.2,4.8);
    \draw[blue, line width = 11pt, opacity = .2] (0,7) to (-2.2,4.8);
     \draw[green!70!black, line width = 2.5pt] (-3.7,4.8) to (-2.2, 4.8) .. controls (-1.65, 4.65) and (-1.35, 4.95) .. (-1.5, 5.5) .. controls (-.95, 5.35) and (-.65,5.65) .. (-.8,6.2) .. controls (-.25, 6.05) and (-.05, 6.25) .. (0,7)  .. controls (.05, 6.25) and (.25, 6.05) .. (.8,6.2) .. controls (.65,5.65) and (.95, 5.35) .. (1.5, 5.5) .. controls (1.35, 4.95) and (1.65, 4.65) .. (2.2, 4.8) to (3.7,4.8);
     \draw[blue!90!black, line width = 2.5pt] (-1,-2) to (0,-1) .. controls (-.5,-.71) and (-.5,-.29) .. (0,0) .. controls (-.5,.29) and (-.5,.71) .. (0,1) .. controls (-.5,1.29) and (-.5,1.71) .. (0,2) to (-2,4) .. controls (-2.3,4.3) and (-2.3,4.7) .. (-2,5) to (0,7) .. controls (-.5,7.29) and (-.5,7.71) .. (0,8) .. controls (-.5,8.29) and (-.5,8.71) .. (0,9) .. controls (-.5,9.29) and (-.5,9.71) .. (0,10) to (-1,11);
    \draw[red!90!black, line width = 2.5pt] (1,-2) to (0,-1) .. controls (.5,-.71) and (.5,-.29) .. (0,0) .. controls (.5,.29) and (.5,.71) .. (0,1) .. controls (.5,1.29) and (.5,1.71) .. (0,2) to (2,4) .. controls (2.3,4.3) and (2.3,4.7) .. (2,5) to (0,7) .. controls (.5,7.29) and (.5,7.71) .. (0,8) .. controls (.5,8.29) and (.5,8.71) .. (0,9) .. controls (.5,9.29) and (.5,9.71) .. (0,10) to (1,11);
    \draw[draw=black!60, fill=black!30] (0,-1) circle (6pt);
    \draw[draw=black!60, fill=black!30] (0,2) circle (6pt);
    \draw[draw=black!60, fill=black!30] (2.2,4.8) circle (6pt);
    \draw[draw=black!60, fill=black!30] (0,7) circle (6pt);
    \draw[draw=black!60, fill=black!30] (-2.2,4.8) circle (6pt);
    \draw[draw=black!60, fill=black!30] (0,10) circle (6pt);
    \end{tikzpicture}
    \label{fig:exceptional_2}}
    \hfill \text{ }
    \caption{The geodesics are constrained to intersect only within the indicated intervals and with the indicated orientations.} \label{fig:exceptional}
\end{figure}
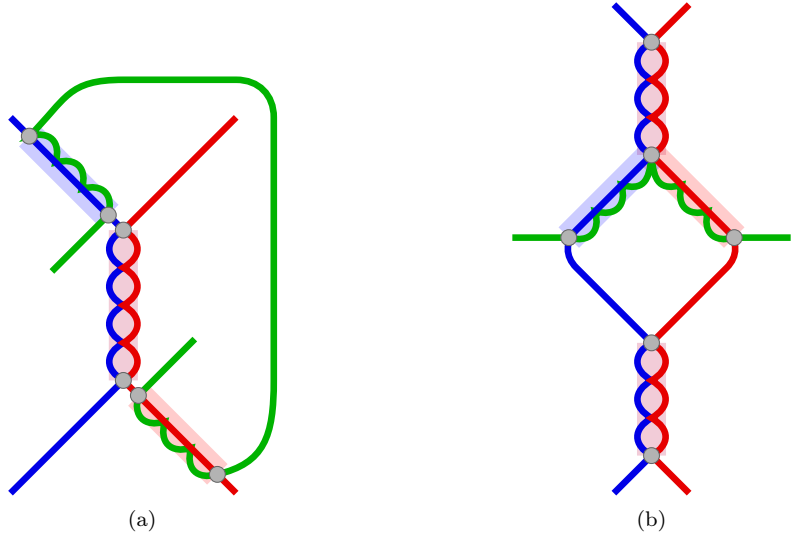

    This main thrust of the proof is to consider all possible ways that the paths $X_1, X_2, X_3$ can intersect and rule out any configurations incompatible with being geodesics. Since there is a large number of possible configurations, the bulk of this analysis is deferred until Appendix B. 

    \begin{proof}
    Let $X$ be a topological space that is the union of three paths $X_1, X_2, X_3$. We first observe that if $X_1, X_2, X_3$ can be given orientations inducing a partial order on $X$, then $X$ carries a length metric in which $X_1, X_2, X_3$ are geodesics. This follows from a similar argument as in Proposition \ref{prop:2geodesics}. 
    
    It is easy to see that if $|X_i \cap X_j| \leq 1$ and $|X_i \cap X_k| \leq 1$ for some distinct $i,j,k \in \{1,2,3\}$, then $X$ can be given a compatible partial order. Thus, in the following, we will assume that $|X_1 \cap X_2| \geq 2$ and $|X_1 \cap X_3| \geq 2$. Choose an orientation on $X_1$; this induces orientations on $X_2$ and $X_3$. As noted, if these three orientations are compatible with a partial order on $X$, then $X$ can be given a metric in which $X_1, X_2, X_3$ are geodesics. There are two ways that this can fail (after reversing the orientation on $X_1$ if needed): (1) there exist points $a \in X_1 \cap X_3$, $b \in X_1 \cap X_2$ and $c \in X_2 \cap X_3$ such that $a \prec_1 b$, $b \prec_2 c$ and $c \prec_3 a$, and (2) there exist points $a,b \in X_2 \cap X_3$ such that $a \prec_3 b$ and $b \prec_2 a$. An analysis of these cases is carried out in Appendix B, which rules out all configurations except those listed in item \eqref{item:non_oriented} of the theorem.

    To finish the proof, we show that there is a metric realizing these remaining configurations. 

    \begin{enumerate}
    \item[(a)] Let $I_{ij}$ be the portion of $X_i \cup X_j$ bracketed by $X_i \cap X_j$ (the regions between the gray points in Figure \ref{fig:exceptional_1}). Assign each $I_{ij}$ a length of $1$, distributed as was done in the proof of Proposition \ref{prop:2geodesics}. Assign each subarc of $X_1, X_2, X_3$ between the $I_{ij}$ a length of $1$. Assign the length of the remaining edges arbitrarily.
    \item[(b)] Define $I_{12}^a, I_{12}^b,I_{13}, I_{23}$ similarly to the previous case. Assign each a length of $1$. Assign the edges in between a length of $1$, and the other edges arbitrarily.
    \end{enumerate} 

    In each case, one can check that $X_1, X_2, X_3$ are geodesics in the given metric.
    \end{proof} 
 
    We are now able to prove Theorem \ref{thm:planar} that all topological spaces $X$ with geodesic cover number at most $3$ are planar. We note for the proof that it is not enough to rule out the graphs $K_5$ and $K_{3,3}$ appearing as subsets of $X$. Since $X$ is not assumed to be a topological graph, there are other configurations that are forbidden from appearing as a subset; see Claytor \cite{Clay:37}. Rather than deal directly with these possibilities, our proof is simply to construct the required embedding.  

    \begin{proof}
    First, we observe that a planar embedding is easy to find for the case of two geodesics. Consider two compatibly oriented overlapping geodesics $X_1$ and $X_2$ parameterized by arc length, where $X_1$ is defined on $[a_1,b_1]$ and $X_2$ is defined on $[a_2,b_2]$, with $X_1^{-1}(x) = X_2^{-1}(x)$ for all $x \in X_1 \cap X_2$. Then define a map $\varphi$ on $X_1$ by setting $\varphi(X_1(t)) = (t,\pm d(X_1(t), X_2))$ and $\varphi(X_2(t)) = (t, \mp d(X_2(t), X_1))$, where the choice of $\pm$ is consistent on each component of $X_1 \setminus X_2$ and the choice of $\mp$ on the noncomparable component of $X_2 \setminus X_1$ is made to be opposite. It is easy to see that this is a well-defined continuous, injective, closed map and hence an embedding of $X_1 \cup X_2$ into $\mathbb{R}^2$.

    Next, suppose that $X$ is the union of three geodesics $X_1, X_2, X_3$. If $X$ has one of the exceptional configurations in Figure \ref{fig:exceptional}, then we can embed $X$ into the plane by following the representations in these pictures. Namely, $X$ can be separated into components separated by the indicated gray vertices. Define $\varphi$ on these vertices by sending them to points in the plane with the same positioning as in the figure. Define $\varphi$ on each region of intersection of two geodesics by taking the affine image of an embedding as in the previous paragraph. Define $\varphi$ on the remaining segments by connecting these vertices with an arc. This can be done so that $\varphi$ is continuous, injective and closed and hence an embedding. 

    The final case is when $X_1, X_2, X_3$ induce a compatible partial order on $X$. In this case, embed first $X_1 \cup X_2$  as in the first paragraph, where the choice of $\pm$ on each component of $X_1 \setminus X_2$ will be determined in this proof. Enumerate the components of $X_3 \setminus (X_1 \cap X_2)$ by $U_1, U_2, U_3, \ldots$. For a given $U_i$, we next enumerate the components of $U_i \setminus (X_1 \cup X_2)$ as $U_i^1, U_i^2, \ldots$. The components $U_i^j$ have one of three possible types: (1) the endpoints of $U_i^j$ belong to the same component of $X_1 \setminus X_2$ or $X_2 \setminus X_1$; (2) one endpoint of $U_i^j$ belongs to a component $W_1$ of $X_1 \setminus X_2$ and the other endpoint belongs to a component $W_2$ of $X_2 \setminus X_1$, where the two components are noncomparable; (3) the endpoints of $U_i^j$ belong to components of $X_1 \setminus X_2$ or $X_2 \setminus X_1$ that are comparable. We make the choice of $\pm$ for each component of $X_1 \setminus X_2$ to ensure that, for each arc of type (3), the two endpoints belong to either the upper half-plane or lower half-plane. To be more precise, let $j_0$ be the smallest value such that $U_i^{j_0}$ is of type (3). Choose $\pm$ arbitrarily for the corresponding component of $X_1 \setminus X_2$. From this choice, the remaining components of type (3) can be enumerated as $U_i^{j_k}$ (in increasing order from $U_i^{j_0}$) and $U_i^{j_{-k}}$ (in decreasing order from $U_i^{j_0}$). The choice of $\pm$ is made inductively in both directions from this initial choice.  The direction of the remaining components of $X_1 \setminus X_2$ can be assigned arbitrarily. We now define $\varphi$ on each set $U_i^j$ either by a straight line between its endpoints (in the case of (2)) or by joining a segment of slope $2$ and one of slope $-2$ that connect the endpoints on the outside of $\varphi(X_1 \cup X_2)$. Note that the existence of a partial order on $X$ guarantees that each vertical line intersects at most one set $\varphi(U_i^j)$, and in particular that the sets $\varphi(U_i^j)$ do not intersect. We conclude that $\varphi$ is continuous, injective and closed and hence an embedding.
    \end{proof} 

    A similar description of spaces with cover number $n$ for $n \geq 5$ is not possible. For example, the well-known Petersen graph can be covered by $5$ paths with each pair intersecting a single point, yet not admitting any compatible length metric. See Figure 1a in \cite{cizma_geodesic_2022} for an illustration. Thus whether or not a space covered by $5$ paths admits a compatible metric is not determined by the set on which these paths intersect. We are not sure whether the case $n=4$ can be characterized in this way.
    
    \section{Computational results} \label{sec:computation}

    We this section, we present our computational results on a number of graphs of interest, found using the algorithm described in Appendix A.


    \subsection{The complete graph $K_5$ on five vertices } \label{sec:K5}

    The graph $K_5$ has cover number $4$. There are three distinct retracted geodesic covers attaining this value, as shown in Figure \ref{fig:K5}. Since $K_5$ is non-planar, it must have extended cover number $4$ by Theorem \ref{thm:planar}. It is easy to see that the unweighted cover number of $K_5$ is $5$, since $K_5$ contains $10$ edges and has diameter $2$ in the unweighted metric. All optimal geodesic covers of \(K_5\) are induced by a weighting where all edges in some \(K_4\) subgraph are length $2$, whereas the edges to some other ``shortcut'' vertex have length $1$. 

    \begin{figure}[t!]
    \centering
    \subfloat[]{
        \begin{tikzpicture}[scale=0.35, every node/.style={transform shape, circle, draw=black!60, fill=black!30, minimum size=1cm, font=\bfseries}]
            \node (A) at (0,0) {};
            \node (B) [below left = 5.775cm and 0.825cm of A] {};
            \node (C) [below right = 5.775cm and 0.825cm of A] {};
            \node (D) [below left = 10.45cm and 6.05cm of A] {};
            \node (E) [below right = 10.45cm and 6.05cm of A] {};
            \draw (A) -- (B);
            \draw (A) -- (C);
            \draw (A) -- (D);
            \draw (A) -- (E);
            \draw (B) -- (D);
            \draw (B) -- (C); 
            \draw (B) -- (E); 
            \draw (C) -- (E);
            \draw (C) -- (D); 
            \draw (D) -- (E); 
            \path (B) -- (D) coordinate[pos=0.5] (BDmid);
            \draw[red!90!black, very thick] ($(BDmid)+(0,0.5)$) -- ($(BDmid)+(0,-0.5)$);
            \path (B) -- (E) coordinate[pos=0.5] (BEmid);
            \draw[yellow!70!black, very thick] ($(BEmid)+(0,0.5)$) -- ($(BEmid)+(0,-0.5)$);
            \path (C) -- (D) coordinate[pos=0.5] (CDmid);
            \draw[blue!90!black, very thick] ($(CDmid)+(0,0.5)$) -- ($(CDmid)+(0,-0.5)$);
            \path (C) -- (E) coordinate[pos=0.5] (CEmid);
            \draw[green!70!black, very thick] ($(CEmid)+(0,0.5)$) -- ($(CEmid)+(0,-0.5)$);
            \draw[red!90!black, very thick, line width = 3pt] (BDmid) -- node[pos=0.5, sloped, above, font=\bfseries\Huge, draw=none, fill=none, inner sep=0pt] {1} (B); 
            \draw[red!90!black, very thick, line width = 3pt] (B) -- node[pos=0.4, sloped, above, font=\bfseries\Huge, draw=none, fill=none, inner sep=0pt] {1} (A); 
            \draw[red!90!black, very thick, line width = 3pt] (A) -- node[pos=0.6, sloped, above, font=\bfseries\Huge, draw=none, fill=none, inner sep=0pt] {1} (C); 
            \draw[red!90!black, very thick, line width = 3pt] (C) -- node[pos=0.5, sloped, above, font=\bfseries\Huge, draw=none, fill=none, inner sep=0pt] {1} (CEmid); 
            \draw[blue!90!black, very thick, line width = 3pt] (CDmid) -- node[pos=0.5, sloped, below, font=\bfseries\Huge, draw=none, fill=none, inner sep=0pt] {1} (D); 
            \draw[blue!90!black, very thick, line width = 3pt] (D) -- node[pos=0.5, sloped, above, font=\bfseries\Huge, draw=none, fill=none, inner sep=0pt] {1} (A); 
            \draw[blue!90!black, very thick, line width = 3pt] (A) -- node[pos=0.5, sloped, above, font=\bfseries\Huge, draw=none, fill=none, inner sep=0pt] {1} (E); 
            \draw[blue!90!black, very thick, line width = 3pt] (E) -- node[pos=0.5, sloped, below, font=\bfseries\Huge, draw=none, fill=none, inner sep=0pt] {1} (BEmid); 
            \draw[green!70!black, very thick, line width = 3pt] (BDmid) -- node[pos=0.1, sloped, above, font=\bfseries\Huge, draw=none, fill=none, inner sep=0pt] {1} (D); 
            \draw[green!70!black, very thick, line width = 3pt] (D) -- node[pos=0.5, sloped, below, font=\bfseries\Huge, draw=none, fill=none, inner sep=0pt] {2} (E); 
            \draw[green!70!black, very thick, line width = 3pt] (D) -- node[pos=0.5, sloped, below, font=\bfseries\Huge, draw=none, fill=none, inner sep=0pt] {2} (E); 
            \draw[green!70!black, very thick, line width = 3pt] (E) -- node[pos=0.9, sloped, above, font=\bfseries\Huge, draw=none, fill=none, inner sep=0pt] {1} (CEmid); 
            \draw[yellow!70!black, very thick, line width = 3pt] (BEmid) -- node[pos=0.3, sloped, below, font=\bfseries\Huge, draw=none, fill=none, inner sep=0pt] {1} (B); 
            \draw[yellow!70!black, very thick, line width = 3pt] (B) -- node[pos=0.5, sloped, above, font=\bfseries\Huge, draw=none, fill=none, inner sep=0pt] {2} (C); 
            \draw[yellow!70!black, very thick, line width = 3pt] (C) -- node[pos=0.7, sloped, below, font=\bfseries\Huge, draw=none, fill=none, inner sep=0pt] {1} (CDmid); 
        \end{tikzpicture}
    }
    \hfill 
    \subfloat[]{
        \begin{tikzpicture}[scale=0.35, every node/.style={transform shape, circle, draw=black!60, fill=black!30, minimum size=1cm, font=\bfseries}]
            \node (A) at (0,0) {};
            \node (B) [below left = 5.775cm and 0.825cm of A] {};
            \node (C) [below right = 5.775cm and 0.825cm of A] {};
            \node (D) [below left = 10.45cm and 6.05cm of A] {};
            \node (E) [below right = 10.45cm and 6.05cm of A] {};   
            \draw (A) -- (B);
            \draw (A) -- (C);
            \draw (A) -- (D);
            \draw (A) -- (E);
            \draw (B) -- (D);
            \draw (B) -- (C); 
            \draw (B) -- (E); 
            \draw (C) -- (E);
            \draw (C) -- (D); 
            \draw (D) -- (E); 
            \path (B) -- (C) coordinate[pos=0.5] (BCmid);
            \draw[red!90!black, very thick] ($(BCmid)+(0,0.5)$) -- ($(BCmid)+(0,-0.5)$);
            \path (B) -- (E) coordinate[pos=0.5] (BEmid);
            \draw[green!70!black, very thick] ($(BEmid)+(0,0.5)$) -- ($(BEmid)+(0,-0.5)$);
            \path (C) -- (D) coordinate[pos=0.5] (CDmid);
            \draw[yellow!70!black, very thick] ($(CDmid)+(0,0.5)$) -- ($(CDmid)+(0,-0.5)$);
            \path (D) -- (E) coordinate[pos=0.5] (DEmid);
            \draw[blue!90!black, very thick] ($(DEmid)+(0,0.5)$) -- ($(DEmid)+(0,-0.5)$);
            \draw[red!90!black, very thick, line width = 3pt] (BCmid) -- node[pos=0.5, sloped, above, font=\bfseries\Huge, draw=none, fill=none, inner sep=0pt] {1} (B); 
            \draw[red!90!black, very thick, line width = 3pt] (B) -- node[pos=0.4, sloped, above, font=\bfseries\Huge, draw=none, fill=none, inner sep=0pt] {1} (A); 
            \draw[red!90!black, very thick, line width = 3pt] (A) -- node[pos=0.5, sloped, above, font=\bfseries\Huge, draw=none, fill=none, inner sep=0pt] {1} (D); 
            \draw[red!90!black, very thick, line width = 3pt] (D) -- node[pos=0.5, sloped, below, font=\bfseries\Huge, draw=none, fill=none, inner sep=0pt] {1} (DEmid); 
            \draw[blue!90!black, very thick, line width = 3pt] (BCmid) -- node[pos=0.5, sloped, above, font=\bfseries\Huge, draw=none, fill=none, inner sep=0pt] {1} (C); 
            \draw[blue!90!black, very thick, line width = 3pt] (C) -- node[pos=0.4, sloped, above, font=\bfseries\Huge, draw=none, fill=none, inner sep=0pt] {1} (A); 
            \draw[blue!90!black, very thick, line width = 3pt] (A) -- node[pos=0.5, sloped, above, font=\bfseries\Huge, draw=none, fill=none, inner sep=0pt] {1} (E); 
            \draw[blue!90!black, very thick, line width = 3pt] (E) -- node[pos=0.5, sloped, below, font=\bfseries\Huge, draw=none, fill=none, inner sep=0pt] {1} (DEmid); 
            \draw[green!70!black, very thick, line width = 3pt] (BEmid) -- node[pos=0.3, sloped, below, font=\bfseries\Huge, draw=none, fill=none, inner sep=0pt] {1} (B); 
            \draw[green!70!black, very thick, line width = 3pt] (B) -- node[pos=0.5, sloped, above, font=\bfseries\Huge, draw=none, fill=none, inner sep=0pt] {2} (D); 
            \draw[green!70!black, very thick, line width = 3pt] (D) -- node[pos=0.5, sloped, below, font=\bfseries\Huge, draw=none, fill=none, inner sep=0pt] {1} (CDmid); 
            \draw[yellow!70!black, very thick, line width = 3pt] (CDmid) -- node[pos=0.3, sloped, below, font=\bfseries\Huge, draw=none, fill=none, inner sep=0pt] {1} (C); 
            \draw[yellow!70!black, very thick, line width = 3pt] (C) -- node[pos=0.5, sloped, above, font=\bfseries\Huge, draw=none, fill=none, inner sep=0pt] {2} (E); 
            \draw[yellow!70!black, very thick, line width = 3pt] (E) -- node[pos=0.5, sloped, below, font=\bfseries\Huge, draw=none, fill=none, inner sep=0pt] {1} (BEmid); 
        \end{tikzpicture}
    }
    \hfill 
    \subfloat[]{
        \begin{tikzpicture}[scale=0.35, every node/.style={transform shape, circle, draw=black!60, fill=black!30, minimum size=1cm, font=\bfseries}]
            \node (A) at (0,0) {};
            \node (B) [below left = 5.775cm and 0.825cm of A] {};
            \node (C) [below right = 5.775cm and 0.825cm of A] {};
            \node (D) [below left = 10.45cm and 6.05cm of A] {};
            \node (E) [below right = 10.45cm and 6.05cm of A] {};
            \draw (A) -- (B);
            \draw (A) -- (C);
            \draw (A) -- (D);
            \draw (A) -- (E);
            \draw (B) -- (D);
            \draw (B) -- (C); 
            \draw (B) -- (E); 
            \draw (C) -- (E);
            \draw (C) -- (D); 
            \draw (D) -- (E); 
            \path (B) -- (C) coordinate[pos=0.5] (BCmid);
            \draw[red!90!black, very thick] ($(BCmid)+(0,0.5)$) -- ($(BCmid)+(0,-0.5)$);
            \path (C) -- (E) coordinate[pos=0.5] (CEmid);
            \draw[green!70!black, very thick] ($(CEmid)+(0,0.5)$) -- ($(CEmid)+(0,-0.5)$);
            \path (B) -- (D) coordinate[pos=0.5] (BDmid);
            \draw[yellow!70!black, very thick] ($(BDmid)+(0,0.5)$) -- ($(BDmid)+(0,-0.5)$);
            \path (D) -- (E) coordinate[pos=0.5] (DEmid);
            \draw[blue!90!black, very thick] ($(DEmid)+(0,0.5)$) -- ($(DEmid)+(0,-0.5)$);
            \draw[red!90!black, very thick, line width = 3pt] (BCmid) -- node[pos=0.5, sloped, above, font=\bfseries\Huge, draw=none, fill=none, inner sep=0pt] {1} (B); 
            \draw[red!90!black, very thick, line width = 3pt] (B) -- node[pos=0.4, sloped, above, font=\bfseries\Huge, draw=none, fill=none, inner sep=0pt] {1} (A); 
            \draw[red!90!black, very thick, line width = 3pt] (A) -- node[pos=0.5, sloped, above, font=\bfseries\Huge, draw=none, fill=none, inner sep=0pt] {1} (D); 
            \draw[red!90!black, very thick, line width = 3pt] (D) -- node[pos=0.5, sloped, below, font=\bfseries\Huge, draw=none, fill=none, inner sep=0pt] {1} (DEmid); 
            \draw[blue!90!black, very thick, line width = 3pt] (BCmid) -- node[pos=0.5, sloped, above, font=\bfseries\Huge, draw=none, fill=none, inner sep=0pt] {1} (C); 
            \draw[blue!90!black, very thick, line width = 3pt] (C) -- node[pos=0.4, sloped, above, font=\bfseries\Huge, draw=none, fill=none, inner sep=0pt] {1} (A); 
            \draw[blue!90!black, very thick, line width = 3pt] (A) -- node[pos=0.5, sloped, above, font=\bfseries\Huge, draw=none, fill=none, inner sep=0pt] {1} (E); 
            \draw[blue!90!black, very thick, line width = 3pt] (E) -- node[pos=0.5, sloped, below, font=\bfseries\Huge, draw=none, fill=none, inner sep=0pt] {1} (DEmid); 
            \draw[green!70!black, very thick, line width = 3pt] (CEmid) -- node[pos=0.5, sloped, above, font=\bfseries\Huge, draw=none, fill=none, inner sep=0pt] {1} (C); 
            \draw[green!70!black, very thick, line width = 3pt] (C) -- node[pos=0.5, sloped, below, font=\bfseries\Huge, draw=none, fill=none, inner sep=0pt] {2} (D); 
            \draw[green!70!black, very thick, line width = 3pt] (D) -- node[pos=0.9, sloped, above, font=\bfseries\Huge, draw=none, fill=none, inner sep=0pt] {1} (BDmid); 
            \draw[yellow!70!black, very thick, line width = 3pt] (BDmid) -- node[pos=0.5, sloped, above, font=\bfseries\Huge, draw=none, fill=none, inner sep=0pt] {1} (B); 
            \draw[yellow!70!black, very thick, line width = 3pt] (B) -- node[pos=0.5, sloped, below, font=\bfseries\Huge, draw=none, fill=none, inner sep=0pt] {2} (E); 
            \draw[yellow!70!black, very thick, line width = 3pt] (E) -- node[pos=0.9, sloped, above, font=\bfseries\Huge, draw=none, fill=none, inner sep=0pt] {1} (CEmid);
        \end{tikzpicture}
    }
    \caption{The three distinct optimal geodesic coverings of $K_5$} \label{fig:K5}
    \end{figure}
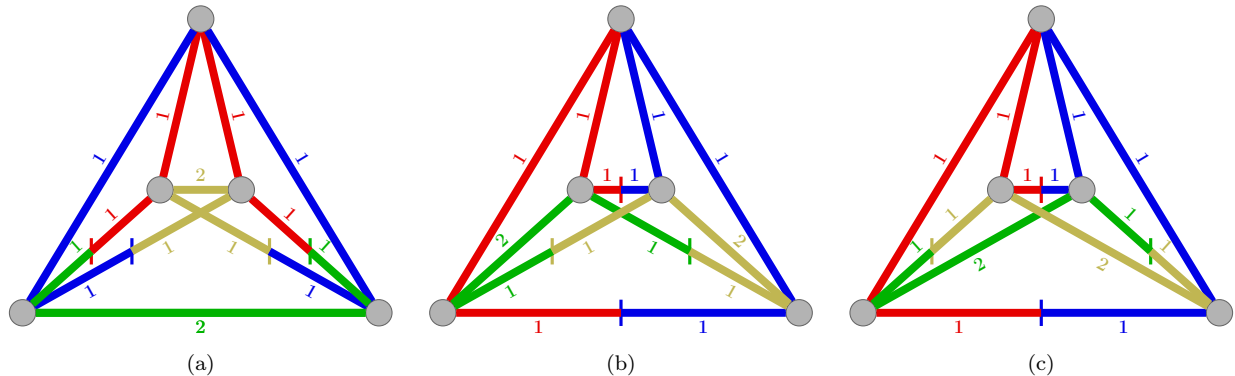

    \subsection{The complete graph $K_4$ on four vertices} \label{sec:K4}

    The cover number of $K_4$ is $4$. Interestingly, this is the same as the larger graph $K_5$. We next show via direct argument that the extended cover number of $K_4$ is also $4$:

    \begin{prop} \label{prop:K4}
        $K_4$ does not appear as a subset of a metric space comprised of $3$ geodesics.
    \end{prop}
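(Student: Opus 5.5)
We argue by contradiction, using the catalogue of Proposition \ref{prop:3geodesic_catalogue}. Suppose $Y = X_1 \cup X_2 \cup X_3$ is a metric space that is the union of three geodesics and contains a subset $K \subset Y$ homeomorphic to $K_4$. By Proposition \ref{prop:3geodesic_catalogue}, after relabeling and orienting, either the orientations of $X_1, X_2, X_3$ induce a compatible partial order $\prec$ on $Y$, or $Y$ has one of the two exceptional configurations of Figure \ref{fig:exceptional}. The exceptional configurations are quickly disposed of. In each, $Y$ decomposes at the gray cut points into a chain of pieces, each of which is a union of at most two geodesics meeting as in Proposition \ref{prop:2geodesics} or an arc. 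Since $K_4$ is $2$-connected, the subset $K$ cannot be separated by a single point. Hence $K$ minus its intersection with the cut points must lie essentially within a single piece (plus arcs), and the problem reduces to the compatible two-geodesic case, which is a special case of the main one.

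The main case is the compatible partial order. The key structural fact is that every point of $Y$ lies on one of three oriented arcs, and $\prec$ is a partial order in which each $X_i$ is a chain. The plan is to show that the union of three chains in such a poset cannot contain a theta-rich subspace like $K_4$. The cleanest approach is to define a continuous \emph{height} function $h \colon Y \to \mathbb{R}$ that is strictly increasing along each oriented $X_i$, using the construction $\varphi$ from the proof of Proposition \ref{prop:2geodesics} extended to three paths. One then observes that every embedded cycle $C \subset Y$ attains its minimum and maximum of $h$ at points $p, q$. These split $C$ into two monotone arcs, each of which must be a concatenation of pieces of the $X_i$.

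Counting is then the crux. A point $x$ of $Y$ at which $h$ has a local maximum on a cycle, or a branch point of $K$ of degree $3$, requires locally at least two (respectively three) germs. These germs are distributed among the $X_i$, which contribute at most one ascending and one descending germ each through any point. At a degree-$3$ vertex $v$ of $K$, the three edge germs split into those going up and those going down in $h$, and at least two must go the same direction, say up. Each of the three geodesics supplies at most one upward germ at $v$. We track, for each of the four vertices of $K_4$, how many of the three geodesics pass through it in each direction. Combining this with the monotone-arc decomposition of the cycles of $K_4$, we aim to show that the $h$-maximal vertex of $K$ and the $h$-minimal vertex of $K$ force all three geodesics to be used in incompatible ways. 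Concretely, let $v_{\max}$ be the vertex of $K_4$ of largest height. All three edges at $v_{\max}$ descend into it, so all three geodesics $X_1, X_2, X_3$ must end at or pass downward through $v_{\max}$ via three distinct incoming germs, and likewise at $v_{\min}$. Then, for the remaining two vertices $v, w$, the triangles $v_{\max} v w$ and $v_{\min} v w$ together with the edge $vw$ force a fourth independent monotone germ at $v$ or $w$. This is impossible since there are only three chains, which contradicts the existence of the two interior vertices with their edges to both extremes.

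The main obstacle is making this germ-counting rigorous in the non-graph setting. Here $X_i \cap X_j$ can be wild (for example, Cantor sets), so ``germs'' must be replaced by a careful argument about which $X_i$ contain one-sided neighborhoods along each edge of $K$. I would first try to reduce to the case where $Y$ is a finite graph. Since $K$ is a finite graph, we replace each $X_i$ by the finite union of subarcs needed to cover $K$, and collapse the irrelevant intersections, so that only finitely many intersection points matter. This reduction should go through by compactness of $K$.
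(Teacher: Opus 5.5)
\textbf{Main case.} You have the right tool, a height $h$ strictly increasing along each $X_i$. But the contradiction you aim for is not available locally. At any point of $Y$, each $X_i$ contributes at most one ascending and one descending germ, so up to six germs are available. Nothing is violated at a single vertex $v$ or $w$, and the ``fourth independent monotone germ'' conflicts with nothing.

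Your claim that all three edges at $v_{\max}$ descend into it is also unjustified, since an edge of $K$ need not be $h$-monotone. Likewise, the assertion that every cycle splits into two monotone arcs is unproved.

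The missing idea is the global fiber count that the paper uses:
\begin{itemize}
\item Since $h$ is strictly monotone on each $X_i$, every level set $h^{-1}(t)$ has at most three points.
\item Every branch vertex of $K$ lies on at least two of the $X_i$. A point on only one of them has an arc neighborhood, because the other two are closed.
\item Hence any two branch vertices share some $X_i$ and have distinct heights, say $h(p_1)<h(p_2)<h(p_3)<h(p_4)$.
\item For $t\in(h(p_2),h(p_3))$, the intermediate value theorem puts a point of $h^{-1}(t)$ in the interior of each of the pairwise disjoint open edges $p_1p_3$, $p_1p_4$, $p_2p_3$, $p_2p_4$. That is four points in one fiber, a contradiction.
\end{itemize}
This uses only continuity of $h$, so no reduction to a finite graph is needed.

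\textbf{Exceptional configurations.} Your reduction here rests on a false picture of these spaces.

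In configuration (a) of Figure~\ref{fig:exceptional}, $X_3$ joins its overlap with the last end component of $X_1\setminus X_2$ to its overlap with the first end component of $X_2\setminus X_1$. So $Y$ contains a cycle through all three overlap regions. This cyclic arrangement is exactly what prevents a compatible partial order. No gray point separates that cycle, so $2$-connectivity of $K_4$ does not confine $K$ to a single piece.

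In configuration (b), the two ends of the bigon $U\cup V$ are indeed cut points. However, the middle piece is made of $X_1$, $X_2$ and $X_3$, with $X_2,X_3$ oppositely oriented. It is not a union of two compatibly oriented geodesics, so Proposition~\ref{prop:2geodesics} does not apply to it.

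A correct version needs stronger input, for example:
\begin{itemize}
\item $K_4$ is $3$-connected, so no two points separate its branch vertices;
\item the two ends of each overlap region form a $2$-point cut;
\item a compatibly oriented union of two arcs contains no theta curve, again by a fiber count.
\end{itemize}
This disposes of (a), whose core is a cycle with some stretches doubled. The middle piece of (b) still requires a separate inspection, as in the paper, which handles both exceptional cases by inspection.
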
 
    \begin{proof}
        Assume that there is a space $X$ comprised of $3$ geodesics $X_1, X_2, X_3$ for which $K_4 \subset X$. According to Theorem \ref{prop:3geodesic_catalogue}, either $X$ has a partial order compatible with a consistent orientation on the geodesics or $X$ falls under one of two exceptional cases. In the first case, give $X$  such an order. Define a mapping $\varphi \colon X \to \mathbb{R}$ that is an isometry on each geodesic following the procedure in the proof of Proposition \ref{prop:2geodesics}. Note that necessarily $\varphi$ is monotone on each geodesic $X_i$, and hence the preimage $\varphi^{-1}(a)$ contains at most $3$ values for each $a \in \mathbb{R}$.
        Next, observe that each vertex belongs to at least two geodesics. Thus any two vertices belong to a common geodesic and must be comparable. Label these vertices as $p_1, p_2, p_3, p_4$, where $\varphi(p_1) < \varphi(p_2) < \varphi(p_3) < \varphi(p_4)$. The restriction of $\varphi$ to the edge $(p_i,p_j)$ (for each pair $i \neq j$) is contains the interval $[\varphi(p_i), \varphi(p_j)]$. Hence any $a$ such that $\varphi(p_2) < a < \varphi(p_3)$, the preimage $\varphi^{-1}(a)$ contains at least four point (one from each of the edges $(p_1,p_3)$, $(p_1,p_4)$, $(p_2,p_3)$, $(p_2, p_4)$). This is a contradiction.        
        
        In the second case, we see by inspection that $K_4$ does not appear as a subset of either of the constructions in these exceptional cases. In particular, each vertex of $K_4$ must belong to three edges, each of which leads to another distinct vertex of $K_4$ without crossing through any other vertices of $K_4$. through each of which one can access each of the other vertices of $K_4$ without returning to $a$. By inspection, there are no four points that can be chosen from these spaces with that property.
    \end{proof} 

    There are many covers of $K_4$ by four geodesics. In Figure \ref{fig:K4}, we give one where each graph edge has length one. Further note that all geodesics have equal length. 

    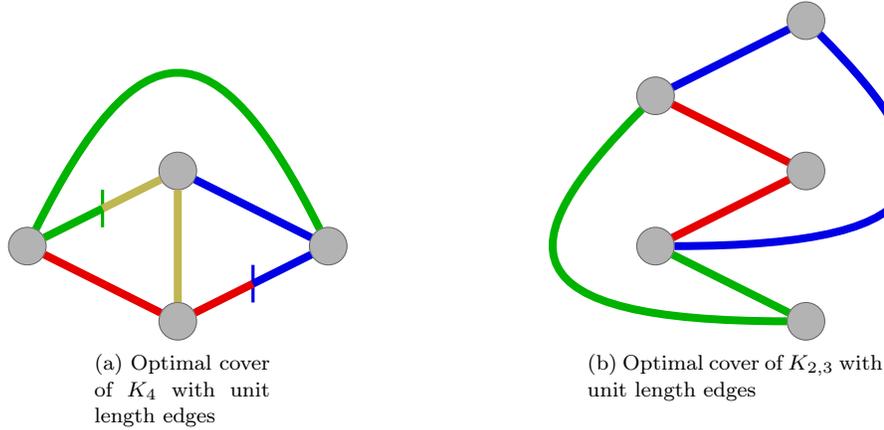
\begin{figure}[t!]
    \centering
    \hfill 
    \subfloat[Optimal cover of $K_4$ with unit length edges]{
        \begin{tikzpicture}[scale=0.5, every node/.style={transform shape, circle, draw=black!60, fill=black!30, minimum size=1cm, font=\bfseries}]
            \node (A) at (0,0) {};
            \node (B) at (4,2) {};
            \node (C) at (4,-2) {};
            \node (D) at (8,0) {};
            \draw[green!70!black, very thick] (2,1.5) -- (2,.5);
            \draw[blue!90!black, very thick] (6,-1.5) -- (6,-.5);
            \draw[green!70!black, very thick, line width = 3pt] (2,1) -- (A) .. controls (3,6) and (5,6) .. (D);
            \draw[red!90!black, very thick, line width = 3pt]  (A) -- (C) -- (6,-1); 
            \draw[blue!90!black, very thick, line width = 3pt] (6,-1) -- (D) -- (B);
            \draw[yellow!70!black, very thick, line width = 3pt] (C) -- (B) -- (2,1); 
        \end{tikzpicture} \label{fig:K4}
    }
    \hfill 
    \subfloat[Optimal cover of $K_{2,3}$ with unit length edges]{
        \begin{tikzpicture}[scale=0.5, every node/.style={transform shape, circle, draw=black!60, fill=black!30, minimum size=1cm, font=\bfseries}]
            \node (A) at (0,0) {};
            \node (B) at (4,2) {};
            \node (C) at (4,-2) {};
            \node (D) at (0,4) {};
            \node (E) at (4,6) {};
            \draw[green!70!black, very thick, line width = 3pt] (A) -- (C) .. controls (-4,-2) and (-4,0) .. (D);
            \draw[red!90!black, very thick, line width = 3pt]  (A) -- (B) -- (D); 
 
            \draw[blue!90!black, very thick, line width = 3pt] (D) -- (E) ..controls (8,2) and (8,0) .. (A); 
        \end{tikzpicture} \label{fig:K23}
    }
    \hfill  \text{ }
    \caption{Geodesic covers for $K_{4}$, $K_{2,3}$ } \label{fig:other_examples}
    \end{figure}

    \subsection{The complete bipartite graph $K_{3,3}$ on two sets of three vertices} \label{sec:K33}

    The graph $K_{3,3}$ has cover number $4$. There are eight distinct retracted geodesic covers up to geodesic rerouting. See Figure \ref{fig:K33}.

    \subsection{The complete bipartite graph $K_{2,3}$ on sets of two and three vertices} \label{sec:K23}

    The graph $K_{2,3}$ has cover number $3$. There are many distinct optimal covers. In Figure \ref{fig:K23}, we give one where each graph edge has length one.

    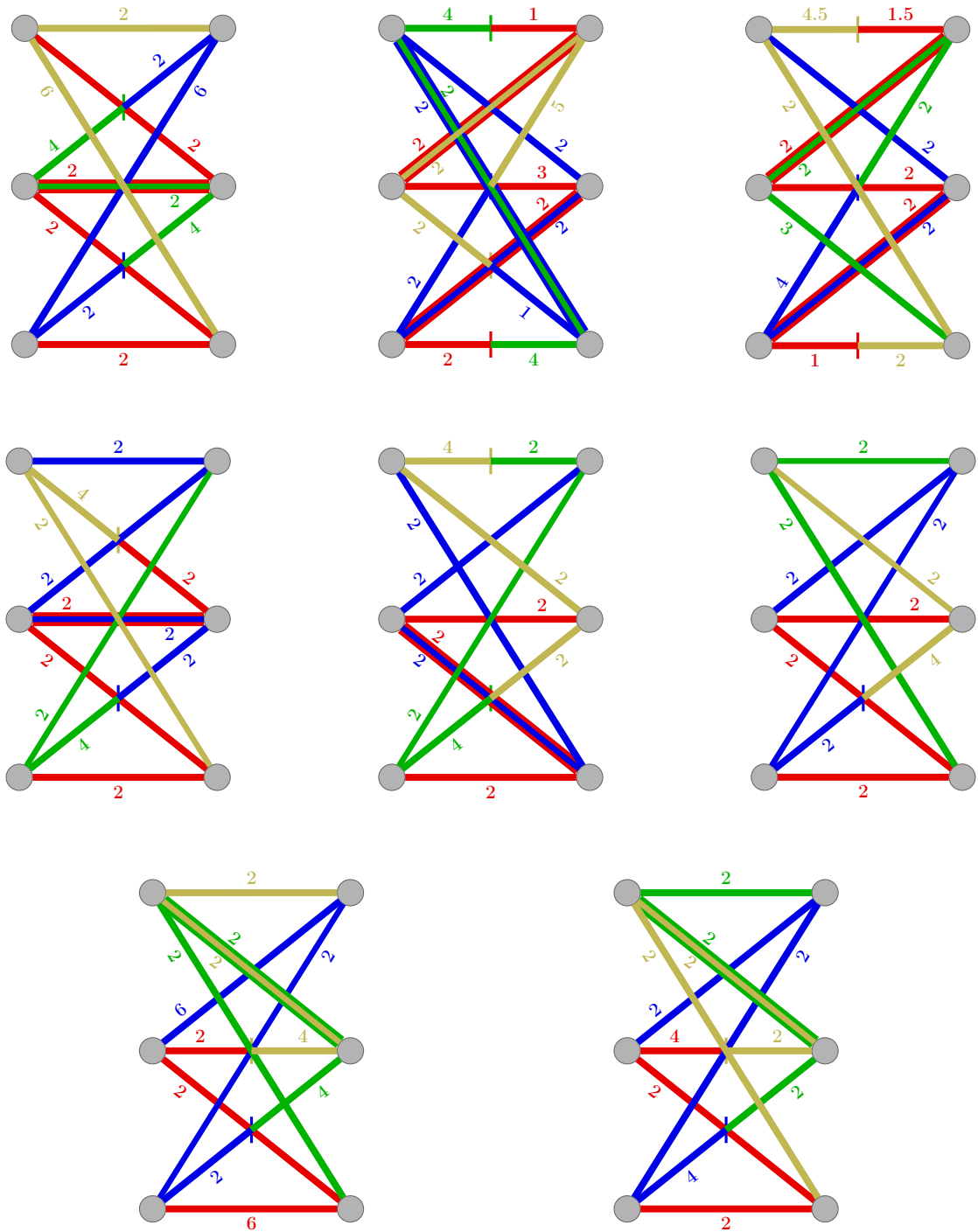
\begin{figure}[t!]
    \centering
    \captionsetup[subfloat]{labelformat=empty, font=small}
    \subfloat[]{
    \begin{minipage}{0.31\textwidth}
        \centering
        \begin{tikzpicture}[scale=0.4, every node/.style={transform shape, circle, draw=black!60, fill=black!30, minimum size=1cm, font=\bfseries}]
            \node (A) at (0,0) {};
            \node (B) [right = 6.5cm of A] {};
            \node (C) [above = 5cm of A] {};
            \node (D) [right = 6.5cm of C] {};
            \node (E) [above = 5cm of C] {};
            \node (F) [right = 6.5cm of E] {};
            \path (C) -- (F) coordinate[pos=0.5] (CFmid);
            \draw[green!70!black, very thick] ($(CFmid)+(0,0.5)$) -- ($(CFmid)+(0,-0.5)$);
            \path (A) -- (D) coordinate[pos=0.5] (ADmid);
            \draw[blue!90!black, very thick] ($(ADmid)+(0,0.5)$) -- ($(ADmid)+(0,-0.5)$);
            \draw[red!90!black, very thick, line width = 3pt] (A) -- node[pos=0.5, sloped, below, font=\bfseries\Huge, draw=none, fill=none, inner sep=0pt] {2} (B); 
            \draw[red!90!black, very thick, line width = 3pt] (B) -- node[pos=0.85, sloped, below, font=\bfseries\Huge, draw=none, fill=none, inner sep=0pt] {2} (C);
            \draw[red!90!black, very thick, line width = 6pt] (C) -- node[pos=0.2, sloped, above, font=\bfseries\Huge, draw=none, fill=none, inner sep=0pt] {2} (D);
            \draw[red!90!black, very thick, line width = 3pt] (D) -- node[pos=0.15, sloped, above, font=\bfseries\Huge, draw=none, fill=none, inner sep=0pt] {2} (E); 
            \draw[blue!90!black, very thick, line width = 3pt] (ADmid) -- node[pos=0.5, sloped, below, font=\bfseries\Huge, draw=none, fill=none, inner sep=0pt] {2} (A);
            \draw[blue!90!black, very thick, line width = 3pt] (A) -- node[pos=0.85, sloped, below, font=\bfseries\Huge, draw=none, fill=none, inner sep=0pt] {6} (F);
            \draw[blue!90!black, very thick, line width = 3pt] (F) -- node[pos=0.5, sloped, above, font=\bfseries\Huge, draw=none, fill=none, inner sep=0pt] {2} (CFmid);
            \draw[green!70!black, very thick, line width = 3pt] (ADmid) -- node[pos=0.69, sloped, below, font=\bfseries\Huge, draw=none, fill=none, inner sep=0pt] {4} (D); 
            \draw[green!70!black, very thick, line width = 2.5pt] (D) -- node[pos=0.2, sloped, below, font=\bfseries\Huge, draw=none, fill=none, inner sep=0pt] {2} (C); 
            \draw[green!70!black, very thick, line width = 3pt] (C) -- node[pos=0.30, sloped, above, font=\bfseries\Huge, draw=none, fill=none, inner sep=0pt] {4} (CFmid); 
            \draw[yellow!70!black, very thick, line width = 3pt] (B) -- node[pos=0.85, sloped, below, font=\bfseries\Huge, draw=none, fill=none, inner sep=0pt] {6} (E);
            \draw[yellow!70!black, very thick, line width = 3pt] (E) -- node[pos=0.5, sloped, above, font=\bfseries\Huge, draw=none, fill=none, inner sep=0pt] {2} (F);
        \end{tikzpicture}
    \end{minipage}
    }
    \hfill
    \subfloat[]{
    \begin{minipage}{0.31\textwidth}
        \centering
        \begin{tikzpicture}[scale=0.4, every node/.style={transform shape, circle, draw=black!60, fill=black!30, minimum size=1cm, font=\bfseries}]
            \node (A) at (0,0) {};
            \node (B) [right = 6.5cm of A] {};
            \node (C) [above = 5cm of A] {};
            \node (D) [right = 6.5cm of C] {};
            \node (E) [above = 5cm of C] {};
            \node (F) [right = 6.5cm of E] {};
            \path (A) -- (B) coordinate[pos=0.5] (ABmid);
            \draw[red!90!black, very thick] ($(ABmid)+(0,0.5)$) -- ($(ABmid)+(0,-0.5)$);
            \path (E) -- (F) coordinate[pos=0.5] (EFmid);
            \draw[green!70!black, very thick] ($(EFmid)+(0,0.5)$) -- ($(EFmid)+(0,-0.5)$);
            \path (B) -- (C) coordinate[pos=0.5] (BCmid);
            \draw[yellow!70!black, very thick] ($(BCmid)+(0,0.5)$) -- ($(BCmid)+(0,-0.5)$);
            \path (A) -- (F) coordinate[pos=0.5] (AFmid);
            \draw[blue!90!black, very thick] ($(AFmid)+(0,0.5)$) -- ($(AFmid)+(0,-0.5)$);
            \draw[red!90!black, very thick, line width = 3pt] (ABmid) -- node[pos=0.5, sloped, below, font=\bfseries\Huge, draw=none, fill=none, inner sep=0pt] {2} (A); 
            \draw[red!90!black, very thick, line width = 6pt] (A) -- node[pos=0.85, sloped, above, font=\bfseries\Huge, draw=none, fill=none, inner sep=0pt] {2} (D);
            \draw[red!90!black, very thick, line width = 3pt] (C) -- node[pos=0.8, sloped, above, font=\bfseries\Huge, draw=none, fill=none, inner sep=0pt] {3} (D);
            \draw[red!90!black, very thick, line width = 6pt] (C) -- node[pos=0.15, sloped, above, font=\bfseries\Huge, draw=none, fill=none, inner sep=0pt] {2} (F); 
            \draw[red!90!black, very thick, line width = 3pt] (F) -- node[pos=0.5, sloped, above, font=\bfseries\Huge, draw=none, fill=none, inner sep=0pt] {1} (EFmid); 
            \draw[blue!90!black, very thick, line width = 3pt] (AFmid) -- node[pos=0.7, sloped, above, font=\bfseries\Huge, draw=none, fill=none, inner sep=0pt] {2} (A);
            \draw[blue!90!black, very thick, line width = 2.5pt] (A) -- node[pos=0.85, sloped, below, font=\bfseries\Huge, draw=none, fill=none, inner sep=0pt] {2} (D);
            \draw[blue!90!black, very thick, line width = 3pt] (D) -- node[pos=0.15, sloped, above, font=\bfseries\Huge, draw=none, fill=none, inner sep=0pt] {2} (E);
            \draw[blue!90!black, very thick, line width = 6pt] (E) -- node[pos=0.2, sloped, below, font=\bfseries\Huge, draw=none, fill=none, inner sep=0pt] {2} (B);
            \draw[blue!90!black, very thick, line width = 3pt] (B) -- node[pos=0.5, sloped, below, font=\bfseries\Huge, draw=none, fill=none, inner sep=0pt] {1} (BCmid);
            \draw[green!70!black, very thick, line width = 3pt] (ABmid) -- node[pos=0.5, sloped, below, font=\bfseries\Huge, draw=none, fill=none, inner sep=0pt] {4} (B); 
            \draw[green!70!black, very thick, line width = 2.5pt] (B) -- node[pos=0.8, sloped, above, font=\bfseries\Huge, draw=none, fill=none, inner sep=0pt] {2} (E); 
            \draw[green!70!black, very thick, line width = 3pt] (E) -- node[pos=0.5, sloped, above, font=\bfseries\Huge, draw=none, fill=none, inner sep=0pt] {4} (EFmid); 
            \draw[yellow!70!black, very thick, line width = 3pt] (AFmid) -- node[pos=0.6, sloped, below, font=\bfseries\Huge, draw=none, fill=none, inner sep=0pt] {5} (F);
            \draw[yellow!70!black, very thick, line width = 2.5pt] (F) -- node[pos=0.85, sloped, below, font=\bfseries\Huge, draw=none, fill=none, inner sep=0pt] {2} (C);
            \draw[yellow!70!black, very thick, line width = 3pt] (C) -- node[pos=0.3, sloped, below, font=\bfseries\Huge, draw=none, fill=none, inner sep=0pt] {2} (BCmid);
        \end{tikzpicture}
    \end{minipage}
    }
    \hfill 
    \subfloat[]{
    \begin{minipage}{0.31\textwidth}
        \centering
        \begin{tikzpicture}[scale=0.4, every node/.style={transform shape, circle, draw=black!60, fill=black!30, minimum size=1cm, font=\bfseries}]
            \node (A) at (0,0) {};
            \node (B) [right = 6.5cm of A] {};
            \node (C) [above = 5cm of A] {};
            \node (D) [right = 6.5cm of C] {};
            \node (E) [above = 5cm of C] {};
            \node (F) [right = 6.5cm of E] {};
            \path (A) -- (B) coordinate[pos=0.5] (ABmid);
            \draw[red!90!black, very thick] ($(ABmid)+(0,0.5)$) -- ($(ABmid)+(0,-0.5)$);
            \path (E) -- (F) coordinate[pos=0.5] (EFmid);
            \draw[yellow!70!black, very thick] ($(EFmid)+(0,0.5)$) -- ($(EFmid)+(0,-0.5)$);
            \path (A) -- (F) coordinate[pos=0.5] (AFmid);
            \draw[blue!90!black, very thick] ($(AFmid)+(0,0.5)$) -- ($(AFmid)+(0,-0.5)$);
            \draw[red!90!black, very thick, line width = 3pt] (ABmid) -- node[pos=0.5, sloped, below, font=\bfseries\Huge, draw=none, fill=none, inner sep=0pt] {1} (A); 
            \draw[red!90!black, very thick, line width = 6pt] (A) -- node[pos=0.85, sloped, above, font=\bfseries\Huge, draw=none, fill=none, inner sep=0pt] {2} (D);
            \draw[red!90!black, very thick, line width = 3pt] (C) -- node[pos=0.8, sloped, above, font=\bfseries\Huge, draw=none, fill=none, inner sep=0pt] {2} (D);
            \draw[red!90!black, very thick, line width = 6pt] (C) -- node[pos=0.15, sloped, above, font=\bfseries\Huge, draw=none, fill=none, inner sep=0pt] {2} (F); 
            \draw[red!90!black, very thick, line width = 3pt] (F) -- node[pos=0.5, sloped, above, font=\bfseries\Huge, draw=none, fill=none, inner sep=0pt] {1.5} (EFmid); 
            \draw[blue!90!black, very thick, line width = 3pt] (AFmid) -- node[pos=0.7, sloped, above, font=\bfseries\Huge, draw=none, fill=none, inner sep=0pt] {4} (A);
            \draw[blue!90!black, very thick, line width = 2.5pt] (A) -- node[pos=0.85, sloped, below, font=\bfseries\Huge, draw=none, fill=none, inner sep=0pt] {2} (D);
            \draw[blue!90!black, very thick, line width = 3pt] (D) -- node[pos=0.15, sloped, above, font=\bfseries\Huge, draw=none, fill=none, inner sep=0pt] {2} (E);
            \draw[green!70!black, very thick, line width = 3pt] (AFmid) -- node[pos=0.6, sloped, below, font=\bfseries\Huge, draw=none, fill=none, inner sep=0pt] {2} (F);
            \draw[green!70!black, very thick, line width = 2.5pt] (F) -- node[pos=0.85, sloped, below, font=\bfseries\Huge, draw=none, fill=none, inner sep=0pt] {2} (C);
            \draw[green!70!black, very thick, line width = 3pt] (C) -- node[pos=0.15, sloped, below, font=\bfseries\Huge, draw=none, fill=none, inner sep=0pt] {3} (B);
            \draw[yellow!70!black, very thick, line width = 3pt] (ABmid) -- node[pos=0.5, sloped, below, font=\bfseries\Huge, draw=none, fill=none, inner sep=0pt] {2} (B);
            \draw[yellow!70!black, very thick, line width = 3pt] (B) -- node[pos=0.805, sloped, below, font=\bfseries\Huge, draw=none, fill=none, inner sep=0pt] {2} (E);
            \draw[yellow!70!black, very thick, line width = 3pt] (E) -- node[pos=0.5, sloped, above, font=\bfseries\Huge, draw=none, fill=none, inner sep=0pt] {4.5} (EFmid);
        \end{tikzpicture}
    \end{minipage}
    }
    
    \subfloat[]{
    \begin{minipage}{0.3\textwidth}
        \centering
        \begin{tikzpicture}[scale=0.4, every node/.style={transform shape, circle, draw=black!60, fill=black!30, minimum size=1cm, font=\bfseries}]
            \node (A) at (0,0) {};
            \node (B) [right = 6.5cm of A] {};
            \node (C) [above = 5cm of A] {};
            \node (D) [right = 6.5cm of C] {};
            \node (E) [above = 5cm of C] {};
            \node (F) [right = 6.5cm of E] {};
            \path (D) -- (E) coordinate[pos=0.5] (DEmid);
            \draw[yellow!70!black, very thick] ($(DEmid)+(0,0.5)$) -- ($(DEmid)+(0,-0.5)$);
            \path (A) -- (D) coordinate[pos=0.5] (ADmid);
            \draw[blue!90!black, very thick] ($(ADmid)+(0,0.5)$) -- ($(ADmid)+(0,-0.5)$);
            \draw[red!90!black, very thick, line width = 3pt] (A) -- node[pos=0.5, sloped, below, font=\bfseries\Huge, draw=none, fill=none, inner sep=0pt] {2} (B); 
            \draw[red!90!black, very thick, line width = 3pt] (B) -- node[pos=0.85, sloped, below, font=\bfseries\Huge, draw=none, fill=none, inner sep=0pt] {2} (C);
            \draw[red!90!black, very thick, line width = 6pt] (C) -- node[pos=0.2, sloped, above, font=\bfseries\Huge, draw=none, fill=none, inner sep=0pt] {2} (D);
            \draw[red!90!black, very thick, line width = 3pt] (D) -- node[pos=0.3, sloped, above, font=\bfseries\Huge, draw=none, fill=none, inner sep=0pt] {2} (DEmid); 
            \draw[blue!90!black, very thick, line width = 3pt] (ADmid) -- node[pos=0.7, sloped, below, font=\bfseries\Huge, draw=none, fill=none, inner sep=0pt] {2} (D);
            \draw[blue!90!black, very thick, line width = 2.5pt] (D) -- node[pos=0.2, sloped, below, font=\bfseries\Huge, draw=none, fill=none, inner sep=0pt] {2} (C);
            \draw[blue!90!black, very thick, line width = 3pt] (C) -- node[pos=0.15, sloped, above, font=\bfseries\Huge, draw=none, fill=none, inner sep=0pt] {2} (F);
            \draw[blue!90!black, very thick, line width = 3pt] (F) -- node[pos=0.5, sloped, above, font=\bfseries\Huge, draw=none, fill=none, inner sep=0pt] {2} (E);
            \draw[green!70!black, very thick, line width = 3pt] (ADmid) -- node[pos=0.5, sloped, below, font=\bfseries\Huge, draw=none, fill=none, inner sep=0pt] {4} (A); 
            \draw[green!70!black, very thick, line width = 2.5pt] (A) -- node[pos=0.15, sloped, above, font=\bfseries\Huge, draw=none, fill=none, inner sep=0pt] {2} (F); 
            \draw[yellow!70!black, very thick, line width = 3pt] (DEmid) -- node[pos=0.5, sloped, above, font=\bfseries\Huge, draw=none, fill=none, inner sep=0pt] {4} (E);
            \draw[yellow!70!black, very thick, line width = 2.5pt] (E) -- node[pos=0.15, sloped, below, font=\bfseries\Huge, draw=none, fill=none, inner sep=0pt] {2} (B);
        \end{tikzpicture}
    \end{minipage}
    }
    \hfill 
    \subfloat[]{
        \begin{minipage}{0.3\textwidth}
        \centering
        \begin{tikzpicture}[scale=0.4, every node/.style={transform shape, circle, draw=black!60, fill=black!30, minimum size=1cm, font=\bfseries}]
            \node (A) at (0,0) {};
            \node (B) [right = 6.5cm of A] {};
            \node (C) [above = 5cm of A] {};
            \node (D) [right = 6.5cm of C] {};
            \node (E) [above = 5cm of C] {};
            \node (F) [right = 6.5cm of E] {};
            \path (A) -- (D) coordinate[pos=0.5] (ADmid);
            \draw[green!70!black, very thick] ($(ADmid)+(0,0.5)$) -- ($(ADmid)+(0,-0.5)$);
            \path (E) -- (F) coordinate[pos=0.5] (EFmid);
            \draw[yellow!70!black, very thick] ($(EFmid)+(0,0.5)$) -- ($(EFmid)+(0,-0.5)$);
            \draw[red!90!black, very thick, line width = 3pt] (A) -- node[pos=0.5, sloped, below, font=\bfseries\Huge, draw=none, fill=none, inner sep=0pt] {2} (B); 
            \draw[red!90!black, very thick, line width = 6pt] (B) -- node[pos=0.85, sloped, above, font=\bfseries\Huge, draw=none, fill=none, inner sep=0pt] {2} (C);
            \draw[red!90!black, very thick, line width = 3pt] (C) -- node[pos=0.8, sloped, above, font=\bfseries\Huge, draw=none, fill=none, inner sep=0pt] {2} (D); 
            \draw[blue!90!black, very thick, line width = 3pt] (E) -- node[pos=0.15, sloped, below, font=\bfseries\Huge, draw=none, fill=none, inner sep=0pt] {2} (B);
            \draw[blue!90!black, very thick, line width = 2.5pt] (B) -- node[pos=0.85, sloped, below, font=\bfseries\Huge, draw=none, fill=none, inner sep=0pt] {2} (C);
            \draw[blue!90!black, very thick, line width = 3pt] (C) -- node[pos=0.15, sloped, above, font=\bfseries\Huge, draw=none, fill=none, inner sep=0pt] {2} (F);
            \draw[green!70!black, very thick, line width = 3pt] (ADmid) -- node[pos=0.5, sloped, below, font=\bfseries\Huge, draw=none, fill=none, inner sep=0pt] {4} (A);
            \draw[green!70!black, very thick, line width = 2.5pt] (A) -- node[pos=0.15, sloped, above, font=\bfseries\Huge, draw=none, fill=none, inner sep=0pt] {2} (F);
            \draw[green!70!black, very thick, line width = 3pt] (F) -- node[pos=0.5, sloped, above, font=\bfseries\Huge, draw=none, fill=none, inner sep=0pt] {2} (EFmid);
            \draw[yellow!70!black, very thick, line width = 3pt] (ADmid) -- node[pos=0.7, sloped, below, font=\bfseries\Huge, draw=none, fill=none, inner sep=0pt] {2} (D);
            \draw[yellow!70!black, very thick, line width = 3pt] (D) -- node[pos=0.15, sloped, above, font=\bfseries\Huge, draw=none, fill=none, inner sep=0pt] {2} (E);
            \draw[yellow!70!black, very thick, line width = 3pt] (E) -- node[pos=0.5, sloped, above, font=\bfseries\Huge, draw=none, fill=none, inner sep=0pt] {4} (EFmid);
        \end{tikzpicture}
    \end{minipage}
    }
    \hfill
    \subfloat[]{
    \begin{minipage}{0.3\textwidth}
        \centering
        \begin{tikzpicture}[scale=0.4, every node/.style={transform shape, circle, draw=black!60, fill=black!30, minimum size=1cm, font=\bfseries}]
            \node (A) at (0,0) {};
            \node (B) [right = 6.5cm of A] {};
            \node (C) [above = 5cm of A] {};
            \node (D) [right = 6.5cm of C] {};
            \node (E) [above = 5cm of C] {};
            \node (F) [right = 6.5cm of E] {};
            \path (A) -- (D) coordinate[pos=0.5] (ADmid);
            \draw[blue!90!black, very thick] ($(ADmid)+(0,0.5)$) -- ($(ADmid)+(0,-0.5)$);
            \draw[red!90!black, very thick, line width = 3pt] (A) -- node[pos=0.5, sloped, below, font=\bfseries\Huge, draw=none, fill=none, inner sep=0pt] {2} (B); 
            \draw[red!90!black, very thick, line width = 3pt] (B) -- node[pos=0.85, sloped, below, font=\bfseries\Huge, draw=none, fill=none, inner sep=0pt] {2} (C);
            \draw[red!90!black, very thick, line width = 3pt] (C) -- node[pos=0.8, sloped, above, font=\bfseries\Huge, draw=none, fill=none, inner sep=0pt] {2} (D);
            \draw[blue!90!black, very thick, line width = 3pt] (ADmid) -- node[pos=0.5, sloped, below, font=\bfseries\Huge, draw=none, fill=none, inner sep=0pt] {2} (A);
            \draw[blue!90!black, very thick, line width = 2.5pt] (A) -- node[pos=0.85, sloped, below, font=\bfseries\Huge, draw=none, fill=none, inner sep=0pt] {2} (F);
            \draw[blue!90!black, very thick, line width = 3pt] (F) -- node[pos=0.85, sloped, above, font=\bfseries\Huge, draw=none, fill=none, inner sep=0pt] {2} (C);
            \draw[green!70!black, very thick, line width = 3pt] (B) -- node[pos=0.85, sloped, below, font=\bfseries\Huge, draw=none, fill=none, inner sep=0pt] {2} (E); 
            \draw[green!70!black, very thick, line width = 2.5pt] (E) -- node[pos=0.5, sloped, above, font=\bfseries\Huge, draw=none, fill=none, inner sep=0pt] {2} (F); 
            \draw[yellow!70!black, very thick, line width = 3pt] (ADmid) -- node[pos=0.7, sloped, below, font=\bfseries\Huge, draw=none, fill=none, inner sep=0pt] {4} (D);
            \draw[yellow!70!black, very thick, line width = 2.5pt] (D) -- node[pos=0.15, sloped, above, font=\bfseries\Huge, draw=none, fill=none, inner sep=0pt] {2} (E);
        \end{tikzpicture}
    \end{minipage}
    }

    \hfill 
    \subfloat[]{
    \begin{minipage}{0.3\textwidth}
        \centering
        \begin{tikzpicture}[scale=0.4, every node/.style={transform shape, circle, draw=black!60, fill=black!30, minimum size=1cm, font=\bfseries}]
            \node (A) at (0,0) {};
            \node (B) [right = 6.5cm of A] {};
            \node (C) [above = 5cm of A] {};
            \node (D) [right = 6.5cm of C] {};
            \node (E) [above = 5cm of C] {};
            \node (F) [right = 6.5cm of E] {};
            \path (A) -- (D) coordinate[pos=0.5] (ADmid);
            \draw[blue!90!black, very thick] ($(ADmid)+(0,0.5)$) -- ($(ADmid)+(0,-0.5)$);
            \path (C) -- (D) coordinate[pos=0.5] (CDmid);
            \draw[yellow!70!black, very thick] ($(CDmid)+(0,0.5)$) -- ($(CDmid)+(0,-0.5)$);
            \draw[red!90!black, very thick, line width = 3pt] (A) -- node[pos=0.5, sloped, below, font=\bfseries\Huge, draw=none, fill=none, inner sep=0pt] {6} (B); 
            \draw[red!90!black, very thick, line width = 3pt] (B) -- node[pos=0.85, sloped, below, font=\bfseries\Huge, draw=none, fill=none, inner sep=0pt] {2} (C);
            \draw[red!90!black, very thick, line width = 3pt] (C) -- node[pos=0.4, sloped, above, font=\bfseries\Huge, draw=none, fill=none, inner sep=0pt] {2} (CDmid); 
            \draw[blue!90!black, very thick, line width = 3pt] (ADmid) -- node[pos=0.5, sloped, below, font=\bfseries\Huge, draw=none, fill=none, inner sep=0pt] {2} (A);
            \draw[blue!90!black, very thick, line width = 2.5pt] (A) -- node[pos=0.85, sloped, below, font=\bfseries\Huge, draw=none, fill=none, inner sep=0pt] {2} (F);
            \draw[blue!90!black, very thick, line width = 3pt] (F) -- node[pos=0.85, sloped, above, font=\bfseries\Huge, draw=none, fill=none, inner sep=0pt] {6} (C);
            \draw[green!70!black, very thick, line width = 3pt] (ADmid) -- node[pos=0.7, sloped, below, font=\bfseries\Huge, draw=none, fill=none, inner sep=0pt] {4} (D);
            \draw[green!70!black, very thick, line width = 6pt] (D) -- node[pos=0.65, sloped, above, font=\bfseries\Huge, draw=none, fill=none, inner sep=0pt] {2} (E);
            \draw[green!70!black, very thick, line width = 3pt] (E) -- node[pos=0.15, sloped, below, font=\bfseries\Huge, draw=none, fill=none, inner sep=0pt] {2} (B);
            \draw[yellow!70!black, very thick, line width = 3pt] (CDmid) -- node[pos=0.6, sloped, above, font=\bfseries\Huge, draw=none, fill=none, inner sep=0pt] {4} (D);
            \draw[yellow!70!black, very thick, line width = 2.5pt] (D) -- node[pos=0.65, sloped, below, font=\bfseries\Huge, draw=none, fill=none, inner sep=0pt] {2} (E);
            \draw[yellow!70!black, very thick, line width = 3pt] (E) -- node[pos=0.5, sloped, above, font=\bfseries\Huge, draw=none, fill=none, inner sep=0pt] {2} (F);
        \end{tikzpicture}
    \end{minipage}
    }
    \hfill
    \subfloat[]{
    \begin{minipage}{0.3\textwidth}
        \centering
        \begin{tikzpicture}[scale=0.4, every node/.style={transform shape, circle, draw=black!60, fill=black!30, minimum size=1cm, font=\bfseries}]
            \node (A) at (0,0) {};
            \node (B) [right = 6.5cm of A] {};
            \node (C) [above = 5cm of A] {};
            \node (D) [right = 6.5cm of C] {};
            \node (E) [above = 5cm of C] {};
            \node (F) [right = 6.5cm of E] {};
            \path (A) -- (D) coordinate[pos=0.5] (ADmid);
            \draw[blue!90!black, very thick] ($(ADmid)+(0,0.5)$) -- ($(ADmid)+(0,-0.5)$);
            \path (C) -- (D) coordinate[pos=0.5] (CDmid);
            \draw[yellow!70!black, very thick] ($(CDmid)+(0,0.5)$) -- ($(CDmid)+(0,-0.5)$);
             \draw[red!90!black, very thick, line width = 3pt] (A) -- node[pos=0.5, sloped, below, font=\bfseries\Huge, draw=none, fill=none, inner sep=0pt] {2} (B); 
            \draw[red!90!black, very thick, line width = 3pt] (B) -- node[pos=0.85, sloped, below, font=\bfseries\Huge, draw=none, fill=none, inner sep=0pt] {2} (C);
            \draw[red!90!black, very thick, line width = 3pt] (C) -- node[pos=0.4, sloped, above, font=\bfseries\Huge, draw=none, fill=none, inner sep=0pt] {4} (CDmid); 
            \draw[blue!90!black, very thick, line width = 3pt] (ADmid) -- node[pos=0.5, sloped, below, font=\bfseries\Huge, draw=none, fill=none, inner sep=0pt] {4} (A);
            \draw[blue!90!black, very thick, line width = 3pt] (A) -- node[pos=0.85, sloped, below, font=\bfseries\Huge, draw=none, fill=none, inner sep=0pt] {2} (F);
            \draw[blue!90!black, very thick, line width = 3pt] (F) -- node[pos=0.85, sloped, above, font=\bfseries\Huge, draw=none, fill=none, inner sep=0pt] {2} (C);
            \draw[green!70!black, very thick, line width = 3pt] (ADmid) -- node[pos=0.7, sloped, below, font=\bfseries\Huge, draw=none, fill=none, inner sep=0pt] {2} (D); 
            \draw[green!70!black, very thick, line width = 6pt] (D) -- node[pos=0.65, sloped, above, font=\bfseries\Huge, draw=none, fill=none, inner sep=0pt] {2} (E); 
            \draw[green!70!black, very thick, line width = 3pt] (E) -- node[pos=0.5, sloped, above, font=\bfseries\Huge, draw=none, fill=none, inner sep=0pt] {2} (F); 
            \draw[yellow!70!black, very thick, line width = 3pt] (CDmid) -- node[pos=0.6, sloped, above, font=\bfseries\Huge, draw=none, fill=none, inner sep=0pt] {2} (D);
            \draw[yellow!70!black, very thick, line width = 2.5pt] (D) -- node[pos=0.65, sloped, below, font=\bfseries\Huge, draw=none, fill=none, inner sep=0pt] {2} (E);
            \draw[yellow!70!black, very thick, line width = 3pt] (E) -- node[pos=0.15, sloped, below, font=\bfseries\Huge, draw=none, fill=none, inner sep=0pt] {2} (B);
        \end{tikzpicture} 
    \end{minipage}
    }
    \hfill \,
    \caption{The eight distinct optimal geodesic covers of $K_{3,3}$} \label{fig:K33}
    \end{figure}

\section*{Appendix A: Algorithm and implementation}


In this appendix, we include details of an algorithm to determine the cover number of a graph. It proceeds in two parts. First, we compile a list of candidate covers of size at most some predetermined \(M\). Recall from Theorem \ref{thm:2subdivision} that we can take all endpoints to be graph vertices of the $2$-subdivision of the original graph. The core subroutine of this step recursively builds subcovers, filtering out any nonretracted subcovers at each step. 

Second, for each candidate cover we test whether it can be realizes as a geodesic cover with some weighting of the segments. This is done by expressing the configuration as a linear program with the weights as the variables and the condition that each path in the cover is a shortest path as a system of linear constraints. If we find a geodesic cover, then the geodesic cover number is shown to be at most \(M\); otherwise, it is more than \(M\). This process is repeated for increasing values of \(M\) until a geodesic cover is found.

\begin{algorithm}[H]
    \caption{FindSubCovers$(s)$}\label{alg:findsubcovers}
    \begin{algorithmic}[1]
    \Require current partial cover $s$; pool of paths $P$; max size $M$; global list $C$
    \If{\Call{CoversAllSegments}{$s$}}
        \State append $s$ to $C$ \Comment complete cover found
        \State \Return
    \ElsIf{$|s| = M$}
        \State \Return
    \EndIf

    \ForAll{$p \in P$}
        \If{$\exists g\in s:p[0] \in g \lor\ p[-1] \in g$} \Comment ensure retraction property satisfied
            \State \textbf{continue}
        \ElsIf{$\exists g\in s:g[0]\in p\lor g[-1]\in p$}
            \State \textbf{continue}
        \EndIf
        \State $s' \gets s \cup \{p\}$
        \State \Call{FindSubCovers}{$s'$}
    \EndFor
    \end{algorithmic}
\end{algorithm}

Once we have the potential covers found by the above algorithm, we further cut down the number we need to consider for the linear program by discarding extra covers that are equivalent up to an automorphism of the graph. We only keep covers that are minimal (with respect to some arbitrary total order) among all equivalent covers up to automorphism. This is done by checking if any permutation in the automorphism group (which is precomputed) results in a smaller cover, and if so, discarding the original.

\begin{algorithm}[H]
\caption{IsMinimalInSymmetries$(c)$}\label{alg:isminimalinsymmetries}
\begin{algorithmic}[1]
\Require cover $c$, precomputed symmetry group $\mathcal{S}=\mathrm{Symmetries}(G)$, total order $<$ of covers
\ForAll{$\sigma \in \mathcal{S}$}
    \If{$\Call{ApplyPermutation}{\sigma, c} < c$}
        \State \Return \textbf{false}
    \EndIf
\EndFor
\State \Return \textbf{true}
\end{algorithmic}
\end{algorithm}

Finally, we optionally discard coverings that are equivalent up to geodesic rerouting. For this, we iteratively update the collection of reroutings by seeing if a local rerouting move results in a new cover (in which case it is added to \(Q\)), and continuing this process until all covers formed by reroutings have been checked. We compare new covers at each step to the starting cover, and if the starting cover is not smaller (with respect to the arbitrary total order), the cover is disqualified.

\begin{algorithm}[H]
\caption{IsMinimalInReroutings$(c)$}\label{alg:isminimalinreroutings}
\begin{algorithmic}[1]
\Require cover $c$; total order $<$ of covers; iterator \Call{Reroutings}{$\cdot$} giving the (finite) neighbors under one rerouting move
\State $\textit{Visited} \gets \{c\}$
\State $\textit{Q} \gets \Call{Queue}{}$
\State \Call{Enqueue}{$\textit{Q}, c$}
\While{\textbf{not} \Call{Empty}{$\textit{Q}$}}
    \State $x \gets \Call{Dequeue}{\textit{Q}}$
    \ForAll{$y \in \Call{Reroutings}{x}$}
        \If{$y \notin \textit{Visited}$}
            \If{$y < c$}
                \State \Return \textbf{false}
            \EndIf
            \State add $y$ to $\textit{Visited}$
            \State \Call{Enqueue}{$\textit{Q}, y$}
        \EndIf
    \EndFor
\EndWhile
\State \Return \textbf{true}
\end{algorithmic}
\end{algorithm}

The overall algorithm to find candidate covers produces an initial list of candidate covers with the first subroutine, then filters that list with the second and third.

For determining whether a given cover can be made into a geodesic cover with some weighting of the segments, we first write those segments as variables constrained to be at least one. This is to ensure strictly positive distances, and we note that it does not change feasibility of the resulting linear program. We set the objective to minimize the sum over these variables. Note that other objectives would also work; we are only interested in the feasibility of a solution rather than the optimality. Finally, a linear program solver is applied to verify if the cover can be made into a geodesic cover.

\begin{algorithm}[H]
\caption{TestGeodesicFeasibility$(c)$}
\begin{algorithmic}[1]
\Require candidate cover $c$ (tuple of paths); global segment set $S$; global path list $P$
\State create LP $\mathcal{P}$ with decision variables $w[s]$ for each $s \in S$; each has lower bound $1$
\State set objective of $\mathcal{P}$ to minimize $\sum_{s \in S} w[s]$
\ForAll{paths $g$ in $c$}
  \ForAll{paths $p$ in $P$ with $p[0] = g[0]$ and $p[-1] = g[-1]$}
    \State add constraint:
    \State \hspace{1em} sum\{$w[s]:s\in$\Call{SegmentsFromPath}{$g$}\}
           $\le$
           sum\{$w[s]:s\in$\Call{SegmentsFromPath}{$p$}\}
  \EndFor
\EndFor
\State solve $\mathcal{P}$
\end{algorithmic}
\end{algorithm}

\section*{Appendix B: Configurations of three paths not compatible with geodesics}

Here, we give the remaining details of the proof of Proposition \ref{prop:3geodesic_catalogue}. Namely, we show that any configuration of paths other than those stated in Proposition \ref{prop:3geodesic_catalogue} do not admit a length metric in which $X_1, X_2, X_3$ are geodesics. 
We recall that we may assume that these geodesics satisfy $|X_1 \cap X_2| \geq 2$ and $|X_1 \cap X_3| \geq 2$, with each pair $X_1, X_2$ and $X_1, X_3$ being compatibly oriented although $X$ itself does not have a partial order compatible with these orientations. There are two ways this can happen: (1) there exist points $a \in X_1 \cap X_3$, $b \in X_1 \cap X_2$ and $c \in X_2 \cap X_3$ such that $a \prec_1 b$, $b \prec_2 c$ and $c \prec_3 a$, and (2) there exist points $a,b \in X_2 \cap X_3$ such that $a \prec_3 b$ and $b \prec_2 a$. These points force $X$ to take one of the configurations below that are shown to be admissible. Any additional points in the intersections $X_1 \cap X_2$, $X_1 \cap X_3$, $X_2 \cap X_3$ must also be compatible with this same configuration. 
We consider each case separately. 


\subsection*{Group 1} Assume there exist points $a \in X_1 \cap X_3$, $b \in X_1 \cap X_2$ and $c \in X_2 \cap X_3$ such that $a \prec_1 b$, $b \prec_2 c$ and $c \prec_3 a$. Observe first that $a \notin X_2$, $b \notin X_3$ and $c \notin X_1$.

    Let $e$ denote a point in $X_1 \cap X_3$ distinct from $a$, which exists by assumption. Let $f$ denote a point in $X_1 \cap X_2$ distinct from $b$, which also exists by assumption. Observe that the points $a,b,e,f$ belong to $X_1$, while $b,c,f$ belong to $X_2$ and $a,c,e$ belong to $X_3$. We note that it's possible for $e=f$. Any arrangement of these points along the geodesics $X_1, X_2, X_3$ gives a valid configuration provided it satisfies the constraints $a \prec_1 b$, $b \prec_2 c$, $c \prec_3 a$, and $b \prec_1 f \Longleftrightarrow b \prec_2 f$ and $a \prec_1 e \Longleftrightarrow a \prec_3 e$. There are $21$ such configurations:

    \begin{multicols}{2}
    \begin{enumerate}
        \item $a \prec_1 b \prec_1 e \preceq_1 f$, \, $b \prec_2 c \prec_2 f$, \, $c \prec_3 a \prec_3 e$
        \item $a \prec_1 b \prec_1 e \preceq_1 f$, \, $b \prec_2 f \prec_2 c$, \, $c \prec_3 a \prec_3 e$
        \item $a \prec_1 b \prec_1 f \preceq_1 e$, \, $b \prec_2 c \prec_2 f$, \, $c \prec_3 a \prec_3 e$
        \item $a \prec_1 b \prec_1 f \preceq_1 e$, \, $b \prec_2 f \prec_2 c$, \, $c \prec_3 a \prec_3 e$
        \item $a \prec_1 e \prec_1 b \prec_1 f$, \, $b \prec_2 c \prec_2 f$, \, $c \prec_3 a \prec_3 e$
        \item $a \prec_1 e \prec_1 b \prec_1 f$, \, $b \prec_2 f \prec_2 c$, \, $c \prec_3 a \prec_3 e$
        \item $a \prec_1 e \preceq_1 f \prec_1 b$, \, $f \prec_2 b \prec_2 c$, \, $c \prec_3 a \prec_3 e$
        \item $a \prec_1 f \prec_1 b \prec_1 e$, \, $f \prec_2 b \prec_2 c$, \, $c \prec_3 a \prec_3 e$
        \item $a \prec_1 f \preceq_1 e \prec_1 b$, \, $f \prec_2 b \prec_2 c$, \, $c \prec_3 a \prec_3 e$
        \item $e \prec_1 a \prec_1 b \prec_1 f$, \, $b \prec_2 c \prec_2 f$, \, $c \prec_3 e \prec_3 a$
        \item $e \prec_1 a \prec_1 b \prec_1 f$, \, $b \prec_2 c \prec_2 f$, \, $e \prec_3 c \prec_3 a$
        \item $e \prec_1 a \prec_1 b \prec_1 f$, \, $b \prec_2 f \prec_2 c$, \, $c \prec_3 e \prec_3 a$
        \item $e \prec_1 a \prec_1 b \prec_1 f$, \, $b \prec_2 f \prec_2 c$, \, $e \prec_3 c \prec_3 a$
        \item $e \prec_1 a \prec_1 f \prec_1 b$, \, $f \prec_2 b \prec_2 c$, \, $c \prec_3 e \prec_3 a$
        \item $e \prec_1 a \prec_1 f \prec_1 b$, \, $f \prec_2 b \prec_2 c$, \, $e \prec_3 c \prec_3 a$
        \item $e \preceq_1 f \prec_1 a \prec_1 b$, \, $f \prec_2 b \prec_2 c$, \, $c \prec_3 e \prec_3 a$
        \item $e \preceq_1 f \prec_1 a \prec_1 b$, \, $f \prec_2 b \prec_2 c$, \, $e \preceq_3 c \prec_3 a$
        \item $f \prec_1 a \prec_1 b \prec_1 e$, \, $f \prec_2 b \prec_2 c$, \, $c \prec_3 a \prec_3 e$
        \item $f \prec_1 a \prec_1 e \prec_1 b$, \, $f \prec_2 b \prec_2 c$, \, $c \prec_3 a \prec_3 a$
        \item $f \preceq_1 e \prec_1 a \prec_1 b$, \, $f \prec_2 b \prec_2 c$, \, $c \prec_3 e \prec_3 a$
        \item $f \preceq_1 e \prec_1 a \prec_1 b$, \, $f \prec_2 b \prec_2 c$, \, $e \prec_3 c \prec_3 a$
    \end{enumerate}
    \end{multicols}
    Each of these configurations can be analyzed by hand or by applying Algorithm 4 in Appendix A. All lead to contradictions except (6), (7), (9), (12) and (14). Moreover, (9) is only possible if $e=f$. All these cases correspond to the configuration in (2a) of Proposition \ref{prop:3geodesic_catalogue} (if $X_2$ and $X_3$ have consistent orientations) or (2b) of the same proposition (if $X_2$ and $X_3$ have inconsistent orientations). Note that it is possible that $X_2$ and $X_3$ intersect in a single point, in which case both (2a) and (2b) reduce to the same configuration. Also note that the equality $e=f$ is permitted in the case of (2b), while in the case of (2a) it leads to a contradiction.

    \subsection*{Group 2} Assume there exist points $a,b \in X_2 \cap X_3$ such that $a \prec_2 b$ and $b \prec_3 a$. Let $c,e$ be distinct points in $X_1 \cap X_2$ and let $f,g$ be distinct points in $X_1 \cap X_3$. By relabeling $c,e$ and $f,g$ if needed, we can assume that $c \prec_1 e$ (and hence $c \prec_2 e$) and that $f \prec_1 g$ (and hence $f \prec_3 g$). We can also assume by symmetry that $f$ is the smallest point in $X_3$ out of $b,a,f,g$ (otherwise, reverse the orientation on $X_2$, put $X_3$ into the role of $X_1$, and relabel vertices as needed to match it with table). Based on these constraints, there are 108 configurations to consider, namely all combinations of the following cases (with the requirement that for any two consecutive inequalities at least one of them must be proper):

    \begin{multicols}{3}
    
    \begin{enumerate}
        \item[(1a)] $c \prec_1 e \preceq_1 f \prec_1 g$
        \item[(1b)] $c \preceq_1 f \preceq_1 e \preceq_1 g$
        \item[(1c)] $c \preceq_1 f \prec_1 g \preceq_1 e$
        \item[(1d)] $f \prec_1 g \preceq_1 c \prec_1 e$
        \item[(1e)] $f \preceq_1 c \preceq_1 g \preceq_1 e$
        \item[(1f)] $f \preceq_1 c \prec_1 e \preceq_1 g$
    \end{enumerate}
    \vfill \columnbreak
    
    \begin{enumerate}
        \item[(2a)] $a \prec_2 b \preceq_2 c \prec_2 e$
        \item[(2b)] $a \preceq_2 c \preceq_2 b \preceq_2 e$
        \item[(2c)] $a \preceq_2 c \prec_2 e \preceq_2 b$
        \item[(2d)] $c \prec_2 e \preceq_2 a \prec_2 b$
        \item[(2e)] $c \preceq_2 a \preceq_2 e \preceq_2 b$
        \item[(2f)] $c \preceq_2 a \prec_2 b \preceq_2 e$
    \end{enumerate}
    \vfill \columnbreak

    \begin{enumerate}
        \item[(3a)] $f \prec_3 g \preceq_3 b \prec_3 a$
        \item[(3b)] $f \preceq_3 b \preceq_3 g \preceq_3 a$
        \item[(3c)] $f \preceq_3 b \prec_3 a \preceq_3 g$
    \end{enumerate}
    \end{multicols}

    In some cases, not all $6$ points are required to reach a contradiction, so the number of cases can be reduced somewhat if desired. In the interest of simplicity, however, we do not pursue this here. By a computer analysis, it can be shown that each of these cases leads to a contradiction except the following seven main cases (where $a,b,c,e,f,g$ are distinct points):

    \begin{tabular}{llll}
        (1) & (1a) $c \prec_1 e \preceq_1 f \prec_1 g$, & (2a) $a \prec_2 b \prec_2 c \prec_2 e$, & (3a) $f \prec_3 g \preceq_3 b \prec_3 a$ \\
        (2) & (1a) $c \prec_1 e \prec_1 f \prec_1 g$, & (2d) $c \prec_2 e \prec_2 a \prec_2 b$, & (3a) $f \prec_3 g \preceq_3 b \prec_3 a$ \\
        (3) & (1c) $c \prec_1 f \prec_1 g \preceq_1 e$, & (2f) $c \prec_2 a \prec_2 b \preceq_2 e$, & (3a) $f \prec_3 g \preceq_3 b \prec_3 a$ \\
        (4) & (1f) $f \preceq_1 c \prec_1 e \prec_1 g$, & (2a) $a \prec_2 b \preceq_2 c \prec_2 e$, & (3c) $f \preceq_3 b \prec_3 a \prec_3 g$ \\
        (5) & (1f) $f \preceq_1 c \prec_1 e \preceq_1 g$, & (2d) $c \prec_2 e \preceq_2 a \prec_2 b$, & (3c) $f \prec_3 b \prec_3 a \preceq_3 g$ \\
        (6) & (1d) $f \prec_1 g \preceq_1 c \prec_1 e$, & (2a) $a \prec_2 b \preceq_2 c \prec_2 e$, & (3a) $f \prec_3 g \preceq_3 b \prec_3 a$ \\
        (7) & (1d) $f \prec_1 g \prec_1 c \prec_1 e$, & (2d) $c \prec_2 e \preceq_2 a \prec_2 b$, & (3a) $f \prec_3 g \prec_3 b \prec_3 a$
    \end{tabular}

    Note that in each case there are some restrictions on when an inequality must be proper. We also note that many other combinations from the above table also yield admissible configurations, but only by identifying certain pairs of points. However, each such ``degenerate'' configuration is a subcase of one of the above seven. 

    We see by inspection that each of the seven configurations is compatible with that in (2b) of Proposition \ref{prop:3geodesic_catalogue}. This completes the proof.

\bibliographystyle{abbrv}  
\bibliography{biblio}

\end{document}